\def\pmod #1{\ ({\rm{mod}}\ #1)}
\def\l{\left}
\def\r{\right}
\def\bg{\bigg}
\def\({\bg(}
\def\){\bg)}
\def\Ack{\medskip\noindent {\bf Acknowledgment}}
\theoremstyle{plain}
\newtheorem{theorem}{Theorem}
\newtheorem{lemma}{Lemma}
\newtheorem{conjecture}{Conjecture}
\theoremstyle{definition}
\theoremstyle{remark}
\newtheorem{remark}{Remark}
\begin{document}
\title
[{Proof of some conjectural congruences}]
{Proof of some conjectural congruences involving Domb numbers}

\author
[Guo-Shuai Mao and Yan Liu] {Guo-Shuai Mao and Yan Liu}

\address {(Guo-Shuai Mao) Department of Mathematics, Nanjing
University of Information Science and Technology, Nanjing 210044,  People's Republic of China\\
{\tt maogsmath@163.com  } }

\address{(Yan Liu) Department of Mathematics, Nanjing
University of Information Science and Technology, Nanjing 210044,  People's Republic of China\\
{\tt 1325507759@qq.com  } }

\keywords{Congruences; Domb numbers; binary quadratic forms; $p$-adic Gamma function; Gamma function.
\newline \indent {\it Mathematics Subject Classification}. Primary 11A07; Secondary 05A19, 33B15, 11E25.
\newline \indent The first author is the corresponding author. This research was supported by the Natural Science Foundation of China (grant 12001288).}

 \begin{abstract} In this paper, we mainly prove the following conjectures of Z.-H. Sun \cite{SH2}:
 Let $p>3$ be a prime. If $p\equiv1\pmod3$ and $p=x^2+3y^2$, then we have
$$
\sum_{k=0}^{p-1}\frac{D_k}{4^k}\equiv\sum_{k=0}^{p-1}\frac{D_k}{16^k}\equiv4x^2-2p-\frac{p^2}{4x^2}\pmod{p^3},
$$
and if $p\equiv2\pmod3$, then
$$
\sum_{k=0}^{p-1}\frac{D_k}{4^k}\equiv-2\sum_{k=0}^{p-1}\frac{D_k}{16^k}\equiv\frac{p^2}2\binom{\frac{p-1}2}{\frac{p-5}6}^{-2} \pmod{p^3},
$$
where $D_n=\sum_{k=0}^n\binom{n}k^2\binom{2k}k\binom{2n-2k}{n-k}$ stands for the $n$th Domb number.
\end{abstract}

\maketitle

\section{Introduction}
\setcounter{lemma}{0}
\setcounter{theorem}{0}
\setcounter{corollary}{0}
\setcounter{remark}{0}
\setcounter{equation}{0}
\setcounter{conjecture}{0}

It is known that the Domb numbers which were introduced by Domb are defined by the following sequence:
$$D_n=\sum_{k=0}^n\binom{n}k^2\binom{2k}{k}\binom{2n-2k}{n-k}.$$
The $n$th Domb number also means the number of $2n$-step polygons on diamond lattice. Such sequence appears as coefficients in various series for $1/\pi$. For example, from \cite{CCL} we know that
$$
\sum_{n=0}^\infty\frac{5n+1}{64^n}D_n=\frac{8}{\sqrt3\pi}.
$$
In \cite{R}, Rogers showed the following identity by using very advanced and complicated method,
$$
\sum_{n=0}^\infty D_nu^n=\frac1{1-4u}\sum_{k=0}^\infty\binom{2k}k^2\binom{3k}k\l(\frac{u^2}{(1-4u)^3}\r)^k.
$$

Y.-P. Mu and Z.-W. Sun \cite{MS18} proved a congruence involving Domb numbers by telescoping method: For any prime $p>3$, we have the supercongruence
$$
\sum_{k=0}^{p-1}\frac{3k^2+k}{16^k}D_k\equiv-4p^4q_p(2)\pmod {p^5},
$$
where $q_p(a)$ denotes the Fermat quotient $(a^{p-1}-1)/p$.

\noindent Liu \cite{liud} proved some conjectures of Z.-W. Sun and Z.-H. Sun. For instance, Let $n$ be a positive integer. Then
$$
\frac1n\sum_{k=0}^{n-1}(2k+1)D_k8^{n-1-k}\ \ \ \mbox{and}\ \ \ \frac1n\sum_{k=0}^{n-1}(2k+1)D_k(-8)^{n-1-k}
$$
are all positive integers.

\noindent Z.-H. Sun gave the following congruence conjecture of the Domb numbers in \cite{SH2}:
\begin{conjecture}\label{Conj1} Let $p>3$ be a prime. Then
$$
D_{p-1}\equiv64^{p-1}-\frac{p^3}6B_{p-3}\pmod{p^4},
$$
\end{conjecture}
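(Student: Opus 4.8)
The plan is to work directly from the definition with $n=p-1$, setting $c_k=\binom{2k}{k}\binom{2(p-1-k)}{p-1-k}$ so that $D_{p-1}=\sum_{k=0}^{p-1}\binom{p-1}{k}^2c_k$. Two structural facts drive everything. First, the convolution identity $\sum_{k=0}^{p-1}c_k=4^{p-1}$, which is the coefficient of $x^{p-1}$ in $\big((1-4x)^{-1/2}\big)^2=(1-4x)^{-1}$. Second, a valuation count: $\binom{2k}{k}$ contributes one factor $p$ exactly when $(p+1)/2\le k\le p-1$ and $\binom{2(p-1-k)}{p-1-k}$ exactly when $0\le k\le(p-3)/2$, so $c_k$ carries precisely one factor of $p$ for every $k\neq(p-1)/2$, while $c_{(p-1)/2}=\binom{p-1}{(p-1)/2}^2$ is a unit. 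Writing $\binom{p-1}{k}^2=1+\alpha_k$ and using the first fact, $D_{p-1}=4^{p-1}+\sum_{k=0}^{p-1}\alpha_kc_k$, so the whole problem is the evaluation of $\sum_k\alpha_kc_k\pmod{p^4}$. I would peel off the central index, whose contribution is $\alpha_{(p-1)/2}c_{(p-1)/2}=\binom{p-1}{(p-1)/2}^4-\binom{p-1}{(p-1)/2}^2$.

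For the off-central indices I would expand $\binom{p-1}{k}^2=\prod_{j=1}^k(1-p/j)^2$ as a power series in $p$, namely $\alpha_k=-2pH_k+(2H_k^2-H_k^{(2)})p^2+O(p^3)$, where $H_k=\sum_{j\le k}1/j$ and $H_k^{(2)}=\sum_{j\le k}1/j^2$. Since $c_k=p\gamma_k$ off the centre, each term $\alpha_kc_k$ is $O(p^2)$, and modulo $p^4$ only the displayed part of $\alpha_k$ survives:
$$\sum_{k\neq(p-1)/2}\alpha_kc_k\equiv-2p^2\sum_{k\neq(p-1)/2}H_k\gamma_k+p^3\sum_{k\neq(p-1)/2}(2H_k^2-H_k^{(2)})\gamma_k\pmod{p^4}.$$
Thus the core of the proof is two harmonic-weighted convolution sums, $\sum H_k\gamma_k\pmod{p^2}$ and $\sum(2H_k^2-H_k^{(2)})\gamma_k\pmod{p}$. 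To make $\gamma_k=c_k/p$ tractable I would use $\binom{2k}{k}=(-4)^k\binom{-1/2}{k}$ together with a $p$-adic reflection for $\binom{2(p-1-k)}{p-1-k}$ — most cleanly phrased through the $p$-adic Gamma function — to re-express $\gamma_k$ in terms of $\binom{2k}{k}/4^k$ and the central coefficient $\binom{p-1}{(p-1)/2}$, after which the two sums collapse to standard central-binomial and harmonic congruences.

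The shape of the answer is already visible modulo $p^2$, and this guides the assembly. With $t=4^{p-1}$, Morley's congruence gives $\binom{p-1}{(p-1)/2}^2\equiv16^{p-1}=t^2$, so the central contribution is $\equiv t^4-t^2$ and $D_{p-1}\equiv t+(t^4-t^2)$; since $t\equiv1\pmod p$ the algebraic identity $t+t^4-t^2-t^3=t(t-1)^2(t+1)$ forces $D_{p-1}\equiv t^3=64^{p-1}\pmod{p^2}$. Pushing to $p^4$ means retaining exactly the $(t-1)^2$-type quantities, which now survive at orders $p^2$ and $p^3$: this requires Morley's congruence refined to $\pmod{p^4}$ for the central term and the two convolution sums above for the rest. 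The Bernoulli number enters precisely here, through the Wolstenholme-type congruences expressing $\sum_{k=1}^{p-1}1/k^2$ (and the associated power sums) in terms of $B_{p-3}$, together with the $p^3B_{p-3}$ term in the refined Morley congruence.

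The main obstacle is the uniform, correctly normalised control of $\binom{2(p-1-k)}{p-1-k}$ one order beyond its $p$-adic valuation across all $k$, so that $\gamma_k$ and the two weighted sums are pinned down to the stated precision; and then the bookkeeping that reorganises all the Fermat-quotient contributions into the single closed form $64^{p-1}$, leaving only the $-\tfrac{p^3}{6}B_{p-3}$ term. I would first verify the whole scheme numerically, e.g. $D_4=2716\equiv216\equiv64^4-\tfrac{5^3}{6}B_2\pmod{5^4}$, to fix every constant before grinding through the reflection and the harmonic-sum congruences.
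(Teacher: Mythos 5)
A preliminary but important point: this paper contains no proof of Conjecture \ref{Conj1} at all --- it is quoted as a statement already confirmed by the first author and J.~Wang in \cite{mw}, so there is no in-paper argument to compare you against, and your attempt has to be judged on its own terms. On those terms, your structural reductions are correct and I could verify each of them: the convolution $\sum_{k=0}^{p-1}\binom{2k}{k}\binom{2p-2-2k}{p-1-k}=4^{p-1}$ (coefficient of $x^{p-1}$ in $(1-4x)^{-1}$); the valuation count showing $c_k=\binom{2k}{k}\binom{2(p-1-k)}{p-1-k}$ has $p$-adic valuation exactly $1$ for all $k\ne(p-1)/2$ (one base-$p$ carry in exactly one of the two factors) while $c_{(p-1)/2}=\binom{p-1}{(p-1)/2}^2$ is a unit; the expansion $\binom{p-1}{k}^2=1-2pH_k+(2H_k^2-H_k^{(2)})p^2+O(p^3)$; the central-term algebra $t+t^4-t^2-t^3=t(t-1)^2(t+1)$ with $t=4^{p-1}$, which does give $D_{p-1}\equiv 64^{p-1}\pmod{p^2}$; and your numerical anchor $D_4=2716\equiv 216\equiv 64^4-\frac{125}{36}\pmod{625}$ checks out exactly.

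The gap is that the proposal stops precisely where the theorem begins. After your (correct) reduction, the entire arithmetic content of the congruence --- every Fermat-quotient cancellation and the eventual $-\frac{p^3}{6}B_{p-3}$ --- is concentrated in three inputs you do not establish: the sum $\sum_{k\ne(p-1)/2}H_k\gamma_k\pmod{p^2}$, the sum $\sum_{k\ne(p-1)/2}(2H_k^2-H_k^{(2)})\gamma_k\pmod p$, and Morley's congruence refined to $\pmod{p^4}$ (whose $B_{p-3}$-coefficient you would need explicitly, since it must recombine with the $B_{p-3}$ arising from Wolstenholme-type evaluations of the weighted sums to produce exactly $-p^3B_{p-3}/6$). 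None of these is a citable black box at the stated precision, and already at the $p^3$ level your scheme silently requires a nontrivial identity of the shape $\sum H_k\gamma_k\equiv 4q_p(2)^2\pmod p$ just to recover the known $D_{p-1}\equiv 64^{p-1}\pmod{p^3}$. There is also a concrete precision error: you say you need the reflected factor $\binom{2(p-1-k)}{p-1-k}$ controlled ``one order beyond its $p$-adic valuation,'' but to pin down $-2p^2\sum H_k\gamma_k$ modulo $p^4$ you need $\gamma_k=c_k/p$ modulo $p^2$, i.e.\ the reflected binomial modulo $p^3$ --- \emph{two} orders beyond its valuation, uniformly in $k$ --- and at that depth the reflection formula acquires $H_{2k}-H_k$ and second-order harmonic corrections whose Lehmer-type evaluation is exactly the hard work. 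So what you have is a sound reduction plus a consistency check (plausibly in the same spirit as the harmonic-number route of \cite{mw}, judging by that paper's title), but not yet a proof.
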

where $\{B_n\}$  are Bernoulli numbers given by
$$
 B_0=1,\ \ \ \sum_{k=0}^{n-1}\binom{n}{k}B_{k}=0\ \ (n\geq2).
$$
\noindent This conjecture was confirmed by the first author and J. Wang \cite{mw}. For more researches on Domb numbers, we refer the readers to (\cite{liud,sund} and so on).

\noindent In \cite{S14}, Z.-W. Sun proposed many congruence conjectures involving Domb numbers, for example \cite[Conjecture 5.2]{S14}:
\begin{conjecture}\label{Conj2} Let $p>3$ be a prime. We have
\begin{align}\label{x}
&\sum_{k=0}^{p-1}\frac{D_k}{4^k}\equiv\sum_{k=0}^{p-1}\frac{D_k}{16^k}\notag\\
&\equiv\begin{cases}4x^2-2p\ \pmod{p^2} &\tt{if}\ \textit{p}\equiv1\pmod 3 \&\ \textit{p}=\textit{x}^2+3\textit{y}^2\ (\textit{x},\textit{y}\in\mathbb{Z}),
\\0\ \pmod{p^2} &\tt{if}\ \textit{p}\equiv2\pmod3.\end{cases}
\end{align}
\end{conjecture}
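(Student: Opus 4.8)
The plan is to reduce both truncated Domb sums to a single truncated hypergeometric sum by means of Rogers' identity, and then to evaluate that sum modulo $p^2$ with the $p$-adic Gamma function. Using
$$\binom{2k}{k}^2\binom{3k}{k}=108^k\frac{(1/2)_k(1/3)_k(2/3)_k}{k!^3},$$
Rogers' identity expresses $\sum_n D_nu^n$ as a series in $w=u^2/(1-4u)^3$. Setting $u=1/16$ gives $1-4u=3/4$ and $w=1/108$, so modulo $p^2$ the sum $\sum_{k=0}^{p-1}D_k/16^k$ collapses to a constant multiple of $\sum_{k=0}^{p-1}(1/2)_k(1/3)_k(2/3)_k/k!^3$. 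The first task is to make this formal manipulation rigorous after truncation: one must check that discarding the tail $k\ge p$ and matching the two truncations introduces only an error divisible by $p^2$, which is where the $p$-adic valuations of $\binom{3k}{k}$ for $p\le 3k$ are used.

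The key arithmetic input is the supercongruence for the reduced sum. I would express each Pochhammer symbol $(1/2)_k,(1/3)_k,(2/3)_k$ and the factorial $k!$ through $\Gamma_p$, apply the reflection formula $\Gamma_p(x)\Gamma_p(1-x)=(-1)^{a_0(x)}$ together with the Gauss multiplication formula, and expand each $\Gamma_p$ factor to second order using its logarithmic derivative. After collecting terms, the leading contribution reduces to a Jacobi sum attached to the cubic character modulo $p$; when $p\equiv1\pmod3$ this Jacobi sum is governed by the representation $p=x^2+3y^2$ and evaluates, up to a determined unit, to $2x$, so that the relevant combination produces the main term $4x^2$, while the next order supplies the correction $-2p$. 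This is precisely the point at which the theory of binary quadratic forms enters, and pinning down the sign and normalization of the Jacobi sum so that the answer is exactly $4x^2-2p$ (rather than off by a unit) is a delicate part of the computation.

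For $p\equiv2\pmod3$ the cubic character is nontrivial in a way that forces vanishing: either the relevant Jacobi/Gauss sum is $\equiv0\pmod{p^2}$, or a pairing symmetry among the $\Gamma_p$ factors makes the whole reduced sum $\equiv0\pmod{p^2}$. I would verify this by the same $\Gamma_p$ expansion, noting that over the critical range $(1/3)_k(2/3)_k$ acquires an extra factor of $p$, which kills the high-order terms and reduces the claim to a finite cancellation among the surviving low-order terms.

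The step I expect to be the main obstacle is the $u=1/4$ case. There $1-4u=0$, so Rogers' identity is genuinely singular and cannot be substituted directly; the finite sum $\sum_{k=0}^{p-1}D_k/4^k$ must be reached by a separate route. My plan is to prove $\sum_{k=0}^{p-1}D_k/4^k\equiv\sum_{k=0}^{p-1}D_k/16^k\pmod{p^2}$ on its own, either through a regularized finite form of Rogers' identity in which the factor $(1-4u)^{-1}$ is cleared before truncation, or by exploiting the reflection symmetry $k\mapsto p-1-k$ together with the known congruence $D_{p-1}\equiv64^{p-1}\pmod{p^4}$, which ties the two denominators through $64=4\cdot16$. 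Controlling the error terms of this singular limit to the accuracy $p^2$ is the crux of the argument, after which the evaluation of the single reduced sum from the previous paragraphs finishes both congruences simultaneously.
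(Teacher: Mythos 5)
Your proposal founders at its very first step, and the failure is not a matter of delicacy but of fact: substituting $u=1/16$ into Rogers' analytic identity and then truncating both series at $p$ terms does \emph{not} produce the constant $\frac{1}{1-4u}=\frac43$, because the factors $(1-4u)^{-3k-1}$ smear each $k$ across infinitely many powers of $u$, so truncating the $u$-series at $u^{p-1}$ and truncating the $k$-series at $k\le p-1$ are inequivalent operations whose mismatch is a $p$-adic unit times the main term, not $O(p^2)$. Concretely, take $p=7=2^2+3\cdot1^2$, so $4x^2-2p=2$. One computes
$$
\sum_{k=0}^{6}\frac{D_k}{16^k}\equiv 2\pmod{49},\qquad \sum_{k=0}^{6}\frac{\binom{2k}k^2\binom{3k}k}{108^k}\equiv 2\pmod{49},
$$
so the true proportionality constant between the two truncated sums is $1$, whereas your prediction $\frac43\cdot2\equiv 19\pmod{49}$ fails even modulo $7$. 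Since your entire second and third paragraphs (the $\Gamma_p$/Jacobi-sum evaluation of $\sum_k(1/2)_k(1/3)_k(2/3)_k/k!^3$) feed off this reduction with the wrong normalization, the argument as designed cannot yield $4x^2-2p$; and your announced worry about pinning down ``a unit'' in the Jacobi-sum evaluation cannot rescue a unit error introduced upstream.

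The repair is exactly what the paper (following Z.-H. Sun, who first proved this statement in \cite{S2020}) does: finitize Rogers \emph{coefficientwise} rather than by numerical substitution. The polynomial identities (\ref{2.0}) and (\ref{2.1}) of Chan--Zudilin and Sun express $D_n$ itself as a finite sum, e.g. $D_n=\sum_{j}\binom{n+j}{3j}\binom{2j}j^2\binom{3j}j4^{n-2j}$; summing over $n\le p-1$ and telescoping via $\sum_{k=2j}^{p-1}\binom{k+j}{3j}=\binom{p+j}{3j+1}$ makes the factor of $p$ visible through $\binom{3j}j\binom{p+j}{3j+1}\equiv\frac{p}{3j+1}(1-pH_{2j}+pH_j)\pmod{p^3}$ (Lemma \ref{Lem2.2}), reducing both Domb sums to $p\sum_{j=0}^{(p-1)/2}\binom{2j}j^2 16^{-j}/(3j+1)$ plus controllable harmonic-sum corrections; the quadratic-form input $4x^2-2p-\frac{p^2}{4x^2}$ is then quoted for that reduced sum from \cite{mp3}, with no Jacobi sums needed. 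Note also that this route dissolves the obstacle you flag at $u=1/4$: identity (\ref{2.1}) has $4^{n-2j}$ with $2j\le n$, so there is no singularity and no regularization or reflection argument is required --- and your proposed fallback symmetry $k\mapsto p-1-k$ tied to $D_{p-1}\equiv 64^{p-1}$ has no known Domb-number duality behind it to make it run.
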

Z-H. Sun \cite[Theorem 5.1]{S2020} proved this conjecture and proposed the following conjecture.
\begin{conjecture}\label{Conj3} Let $p>3$ be a prime. If $p\equiv1\pmod3$ and $p=x^2+3y^2$, then we have
$$
\sum_{k=0}^{p-1}\frac{D_k}{4^k}\equiv\sum_{k=0}^{p-1}\frac{D_k}{16^k}\equiv4x^2-2p-\frac{p^2}{4x^2}\pmod{p^3},
$$
and if $p\equiv2\pmod3$, then
$$
\sum_{k=0}^{p-1}\frac{D_k}{4^k}\equiv-2\sum_{k=0}^{p-1}\frac{D_k}{16^k}\equiv\frac{p^2}2\binom{\frac{p-1}2}{\frac{p-5}6}^{-2} \pmod{p^3},
$$
\end{conjecture}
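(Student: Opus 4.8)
\medskip
The plan is to reduce both Domb sums, modulo $p^3$, to the single truncated sum
$$S:=\sum_{k=0}^{p-1}\frac{\binom{2k}{k}^2\binom{3k}{k}}{108^k}=\sum_{k=0}^{p-1}\frac{(1/2)_k(1/3)_k(2/3)_k}{(1)_k^3},$$
and then to evaluate $S\pmod{p^3}$ by $p$-adic Gamma methods. The bridge to $S$ is Rogers' identity quoted in the introduction: with $u=1/16$ one has $1-4u=3/4$ and $u^2/(1-4u)^3=1/108$, so the two sides of Rogers' identity formally match $\sum_n D_n/16^n$ against $\tfrac43\sum_k\binom{2k}{k}^2\binom{3k}{k}/108^k$. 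Since $u=1/16$ lies outside the disk of convergence of $\sum D_nu^n$, one cannot simply substitute; instead I would manufacture an \emph{honest finite} identity by running Zeilberger's algorithm (a WZ certificate) on the summand of Rogers' identity, obtaining
$$\sum_{k=0}^{p-1}\frac{D_k}{16^k}=\frac43\,S+R_{p-1},$$
with an explicit boundary term $R_{p-1}$ built from $\binom{2(p-1)}{p-1}^2\binom{3(p-1)}{p-1}$. Using the high $p$-divisibility of $\binom{2k}{k}^2\binom{3k}{k}$ for $k$ near $p-1$ (Lucas' theorem together with Kummer's carry count) I would show $R_{p-1}\equiv0\pmod{p^3}$, so that $\sum_{k=0}^{p-1}D_k/16^k\equiv\tfrac43\,S\pmod{p^3}$. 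The already-known mod $p^2$ result of Z.-H. Sun then serves as a consistency check on the leading terms.

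For $\sum_{k=0}^{p-1}D_k/4^k$ the value $u=1/4$ is a pole of the right-hand side of Rogers' identity, so a direct reduction fails. I would instead prove the two stated linear relations between the Domb sums as a separate lemma: a finite telescoping identity (again via a WZ certificate, now comparing the $u=1/4$ and $u=1/16$ truncations) reduces $\sum_{k=0}^{p-1}D_k/4^k$ to $\tfrac43\,S$ together with correction terms whose $p^3$-reduction is controlled by the $p$-adic valuation pattern of $\binom{2k}{k}^2\binom{3k}{k}$. The residue class of $p$ modulo $3$ dictates which Pochhammer factors carry the extra factor of $p$, and this is exactly what turns the same core quantity $S$ into $\sum D_k/4^k\equiv\sum D_k/16^k$ when $p\equiv1\pmod3$ and into $\sum D_k/4^k\equiv-2\sum D_k/16^k$ when $p\equiv2\pmod3$.

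The technical heart is the evaluation of $S\pmod{p^3}$. I would write each Pochhammer symbol as a ratio of $p$-adic Gamma values via $(a)_k=(-1)^k\Gamma_p(a+k)/\Gamma_p(a)$ and split the range $0\le k\le p-1$ at the two indices in $(0,p)$ where $(1/3)_k$ or $(2/3)_k$ acquires a factor $p$ (located near $p/3$ and $2p/3$). On each block I would expand $\Gamma_p$ to second order, retaining its first two logarithmic derivatives $\Gamma_p'/\Gamma_p$ and $(\Gamma_p'/\Gamma_p)'$ so as to keep all terms up to $p^3$, and then evaluate the resulting sums using the reflection formula, the Gauss–Legendre multiplication formula, and the Gross–Koblitz formula. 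The dependence on the binary quadratic form enters here: for $p\equiv1\pmod3$ the relevant product of $p$-adic Gamma values at $1/3,2/3$ is a Jacobi sum of order three, pinned down modulo $p^3$ in terms of $x$ by the classical evaluation attached to $p=x^2+3y^2$; for $p\equiv2\pmod3$ the analogous quantity is a $p$-adic unit expressible through $\binom{(p-1)/2}{(p-5)/6}$, which is why that binomial (and its reciprocal square) surfaces in the closed form.

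The main obstacle I anticipate is this last mod-$p^3$ $p$-adic Gamma computation: carrying the second-order expansions of several Gamma factors simultaneously while correctly treating the block in which a Pochhammer symbol is \emph{exactly} divisible by $p$, so that a factor of $p$ in the numerator lowers the required expansion order but couples to the compensating $1/p$ coming from the vanishing factor in the denominator. After that, the accumulated $p$-adic constants must be matched against the explicit targets $4x^2-2p-\tfrac{p^2}{4x^2}$ and $\tfrac{p^2}2\binom{(p-1)/2}{(p-5)/6}^{-2}$. Establishing the finite Rogers-type identities with boundary terms sharp enough for $p^3$ is a secondary, more mechanical, difficulty.
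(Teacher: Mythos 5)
Your proposal founders at its first and central step: the claimed finite bridge $\sum_{k=0}^{p-1}D_k/16^k=\tfrac43S+R_{p-1}$ with $R_{p-1}\equiv0\pmod{p^3}$ is false, already modulo $p$. Take $p=7=2^2+3\cdot1^2$ (so $4x^2-2p=2$). With $D_0,\dots,D_6=1,4,28,256,2716,31504,387136$ one finds
$$
\sum_{k=0}^{6}\frac{D_k}{16^k}\equiv2\pmod{49},\qquad S=\sum_{k=0}^{6}\frac{\binom{2k}{k}^2\binom{3k}{k}}{108^k}\equiv2\pmod{49},
$$
whereas $\tfrac43S\equiv\tfrac83\equiv5\not\equiv2\pmod7$. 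So the two truncated sums agree with coefficient $1$, not $\tfrac43$: truncation at $p-1$ does not commute with Rogers' analytic identity, and the prefactor $\tfrac{1}{1-4u}$ does not survive as a constant. What replaces it in the correct finite versions --- the Chan--Zudilin and Z.-H. Sun transformations (\ref{2.0}) and (\ref{2.1}), which is the route the paper actually takes --- is a $j$-dependent weight: interchanging the order of summation produces inner sums $\sum_{k=j}^{p-1}\binom{k+2j}{3j}=\binom{p+2j}{3j+1}$ (resp.\ $\sum_{k=2j}^{p-1}\binom{k+j}{3j}=\binom{p+j}{3j+1}$), and by Lemma \ref{Lem2.2} and Lemma \ref{p2j} these binomials contribute the factor $\frac{p}{3j+1}\bigl(1\pm p(H_{2j}-H_j)\bigr)$, which is $p$ times a unit except near $j=(p-1)/3$ and $j=(2p-1)/3$, where $3j+1\equiv0\pmod p$ and the main term $4x^2-2p$ is created. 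No constant multiple of $S$ can encode this mechanism, so no boundary term with $R_{p-1}\equiv0\pmod{p^3}$ can exist, and the consistency check against Sun's mod-$p^2$ result would in fact have exposed the error.

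The rest of the plan inherits this defect and is underspecified precisely where it is new. For $u=1/4$ you offer only ``a finite telescoping identity comparing the two truncations''; the pole at $u=1/4$ is not removable that way, and the paper needs the genuinely different finite transformation (\ref{2.1}), with weight $4^{n-2k}$ and range $j\le\lfloor n/2\rfloor$, to treat the $4^k$ sum at all. Likewise, the ratios $1$ (for $p\equiv1\pmod3$) and $-2$ (for $p\equiv2\pmod3$) between the two Domb sums are not consequences of valuation patterns of $\binom{2k}{k}^2\binom{3k}{k}$: when $p\equiv2\pmod3$ both sums are $\equiv0\pmod{p^2}$, and the factor $-2$ emerges only from the full mod-$p^3$ evaluations, in which the $16^k$ sum acquires the extra contribution $Y_1$ from the range $(p+1)/2\le j\le p-1$ (computed with $\Gamma_p$); a blockwise valuation count does not see it. Finally, note that even the paper does not evaluate the unit part by a second-order Gross--Koblitz computation: it reduces everything to $p\sum_{j=0}^{(p-1)/2}\binom{2j}{j}^2/\bigl(16^j(3j+1)\bigr)\equiv4x^2-2p-\frac{p^2}{4x^2}\pmod{p^3}$, quoted from \cite{mp3}, together with \texttt{Sigma}-certified identities and the harmonic-number congruences of Lemma \ref{sunh}. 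Your $\Gamma_p$ toolbox is the right one for the $p\equiv2\pmod3$ constants (the paper uses exactly (\ref{Gammap})--(\ref{pn}) there), but the reduction feeding into it must be replaced wholesale by finite transformation formulas of the type (\ref{2.0})--(\ref{2.1}).
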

\noindent In this paper, our main goal is to prove conjecture \ref{Conj3}.
 \begin{theorem}\label{Th1.1} Conjecture \ref{Conj3} is true.
\end{theorem}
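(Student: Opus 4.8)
The plan is to reduce both Domb sums to a single central hypergeometric sum and then evaluate that sum modulo $p^3$ by means of the $p$-adic Gamma function. Starting from Rogers' identity quoted above together with the rewriting $\binom{2k}{k}^2\binom{3k}{k}=\frac{(2k)!(3k)!}{k!^5}=108^k\frac{(1/2)_k(1/3)_k(2/3)_k}{k!^3}$, and specialising $u=1/16$ (so that $u^2/(1-4u)^3=1/108$ and $1/(1-4u)=4/3$), Rogers' identity formally gives $\sum_k D_k/16^k=\frac43\sum_k\frac{(1/2)_k(1/3)_k(2/3)_k}{k!^3}$. The first task is to turn this into a congruence between truncated sums: I would establish a finite (creative-telescoping) version of Rogers' identity and show that the truncation errors are divisible by $p^3$, using that for the relevant large $k$ several of the Pochhammer factors $(1/2)_k,(1/3)_k,(2/3)_k$ vanish modulo $p$ and thereby force high $p$-divisibility. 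This reduces $S_{16}:=\sum_{k=0}^{p-1}D_k/16^k$ to
$$S_{16}\equiv\frac43\,T\pmod{p^3},\qquad T:=\sum_{k=0}^{p-1}\frac{(1/2)_k(1/3)_k(2/3)_k}{k!^3}.$$
The sum $S_4=\sum_{k=0}^{p-1}D_k/4^k$ cannot be obtained this way, since $u=1/4$ is the pole $1-4u=0$ of Rogers' identity; for it I would instead prove the linking congruences $S_4\equiv S_{16}$ (for $p\equiv1\pmod3$) and $S_4\equiv-2S_{16}$ (for $p\equiv2\pmod3$) modulo $p^3$ directly, refining Z.-H. Sun's modulo-$p^2$ comparison, so that the whole theorem follows once $T$ is known.

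The heart of the argument is the evaluation of $T$ modulo $p^3$. I would write each Pochhammer symbol as a ratio of $p$-adic Gamma values, $(a)_k=(-1)^k\Gamma_p(a+k)/\Gamma_p(a)$ up to the usual first-digit corrections, so that a general term of $T$ becomes a product of $\Gamma_p$ values at the points $1/2,1/3,2/3$ and their $k$-shifts. The vanishing of Pochhammer factors modulo $p$ noted above concentrates the sum on a short range of indices $k$, and on that range I would Taylor-expand $\Gamma_p$ to second order about the critical integer points; the expansion coefficients are the $p$-adic harmonic sums $H_{p-1}^{(1)},H_{p-1}^{(2)}$ and Bernoulli numbers (the latter entering through Wolstenholme-type and $B_{p-3}$ identities). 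Splitting into residue classes of $p$ modulo $3$, the case $p\equiv2\pmod3$ is the more elementary one: there $(1/3)_k(2/3)_k$ carries a factor of $p$ once $k\ge(p+1)/3$, all but a controlled window of terms drop out modulo $p^3$, and after the cancellations the residue collapses to a single central binomial coefficient, which $\Gamma_p$ identifies with $\binom{(p-1)/2}{(p-5)/6}^{-2}$.

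For $p\equiv1\pmod3$ the sum does not localise so drastically, and the dependence on the representation $p=x^2+3y^2$ enters through the value of $\Gamma_p(1/3)$. Here I would invoke the Gross--Koblitz formula, which expresses $\Gamma_p(1/3)$ in terms of a Jacobi (Gauss) sum whose norm encodes $p=x^2+3y^2$ (equivalently $4p=L^2+27M^2$); combined with a refinement of Gross--Koblitz valid modulo $p^3$ this pins down $\Gamma_p(1/3)^3$, and hence $T$, to the required precision, producing both the leading term $4x^2-2p$ and the correction $-p^2/(4x^2)$. Assembling the pieces --- the Rogers reduction, the linking congruences for $S_4$, and the two case evaluations of $T$ --- then yields Conjecture \ref{Conj3}.

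I expect the main obstacle to be precisely the modulo-$p^3$ bookkeeping, where three points are delicate. First, one must bound the truncation error in the finite Rogers reduction to one higher power of $p$ than the naive estimate gives. Second, carrying the second-order Taylor expansion of $\Gamma_p$ through the sum without losing track of the many $H_{p-1}^{(\cdot)}$ and Bernoulli contributions is error-prone, since these must conspire to cancel down to the stated closed forms. Third, and most importantly, one must sharpen Gross--Koblitz from a modulo-$p$ or modulo-$p^2$ statement about $\Gamma_p(1/3)$ to a modulo-$p^3$ one, since it is exactly this last refinement that is responsible for the final term $-p^2/(4x^2)$.
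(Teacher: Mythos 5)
Your central reduction is false, not merely unproven. You claim $\sum_{k=0}^{p-1}D_k/16^k\equiv\frac43T\pmod{p^3}$ with $T=\sum_{k=0}^{p-1}\frac{(1/2)_k(1/3)_k(2/3)_k}{k!^3}=\sum_{k=0}^{p-1}\binom{2k}k^2\binom{3k}k\,108^{-k}$, the factor $\frac43$ coming from the prefactor $\frac1{1-4u}$ of Rogers' series at $u=\frac1{16}$. That prefactor does not survive truncation. Take $p=7$ (so $p=x^2+3y^2$ with $x^2=4$ and $4x^2-2p=2$): a direct computation gives $\sum_{k=0}^{6}D_k/16^k\equiv 2\pmod{49}$ and $T\equiv 2\pmod{49}$, whereas $\frac43T\equiv 5\pmod 7$; so your reduction fails already modulo $p$. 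The reason is visible in the correct finite form of Rogers' identity, namely the Chan--Zudilin formula \eqref{2.0}: there the geometric factor is woven into the binomial weight $\binom{n+2k}{3k}16^{n-k}$, and after summing $n$ from $0$ to $p-1$ this weight becomes $\binom{p+2k}{3k+1}$, whose contribution modulo $p^3$ is $\frac{(-1)^k p}{(3k+1)\binom{3k}k}(1+pH_{2k}-pH_k)$ by Lemma \ref{p2j} --- i.e.\ the truncated Domb sum is $p$ times a \emph{weighted} central sum, not a constant multiple of $T$. Your side remark that $u=\frac14$ is inaccessible because it is a pole is also mistaken: Z.-H. Sun's identity \eqref{2.1} is precisely the finite transformation adapted to $\sum_{k=0}^{p-1}D_k/4^k$, and the paper runs both sums in parallel through \eqref{2.0} and \eqref{2.1}. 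Your substitute --- proving the linking congruences $S_4\equiv S_{16}$ and $S_4\equiv-2S_{16}\pmod{p^3}$ ``directly'' --- names no mechanism and is itself half of Conjecture \ref{Conj3}.

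The other load-bearing step is equally unsupported. For $p\equiv1\pmod 3$ you route all of the quadratic-form content through a ``refinement of Gross--Koblitz valid modulo $p^3$'' for $\Gamma_p(1/3)$; no such off-the-shelf statement exists at that precision, and producing one is a project comparable to the theorem itself. The paper never needs it: after swapping the order of summation it faces $p\sum_{j=0}^{(p-1)/2}\frac{\binom{2j}j^2}{16^j}\cdot\frac{1-pH_{2j}+pH_j}{3j+1}$, kills the harmonic part modulo $p$ via the Sigma identity \eqref{equat} combined with the congruences of Lemma \ref{sunh}, and then \emph{imports} the evaluation $p\sum_{j=0}^{(p-1)/2}\frac{\binom{2j}j^2}{16^j(3j+1)}\equiv 4x^2-2p-\frac{p^2}{4x^2}\pmod{p^3}$ from \cite{mp3}; the case $p\equiv2\pmod3$ is then finished by elementary manipulations with the basic properties \eqref{Gammap}--\eqref{pn} of $\Gamma_p$, with no Gauss or Jacobi sums anywhere. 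Note also that even your localisation claim for $p\equiv2\pmod3$ is subtler than you suggest: in the $16^k$ sum the index $j=\frac{2p-1}3$ beyond $(p-1)/2$ survives and contributes the order-$p^2$ correction $Y_1\equiv\frac{9p^2}4\binom{(p-1)/2}{(p-5)/6}^{-2}\pmod{p^3}$, without which the final constant is wrong. In sum, your proposal is not an alternative route with fixable gaps: its first reduction computes a provably different quantity, and its two remaining pillars (the mod-$p^3$ linking congruences and the mod-$p^3$ Gross--Koblitz) are conjectural.
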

Z.-W. Sun \cite{S14} also conjectured that If $p\equiv1\pmod3$, then
$$
\sum_{k=0}^{p-1}(3k+2)\frac{D_k}{4^k}\equiv\sum_{k=0}^{p-1}(3k+1)\frac{D_k}{{16}^k}\equiv0\pmod{p^2}.
$$
Our second goal is to prove the following stronger result and thus prove the above conjecture:
\begin{theorem}\label{Th1.2}If $p\equiv1\pmod3$, then
$$
\sum_{k=0}^{p-1}(3k+2)\frac{D_k}{4^k}\equiv2\sum_{k=0}^{p-1}(3k+1)\frac{D_k}{{16}^k}\equiv2p^2\binom{\frac{p-1}2}{\frac{p-1}6}^{-2}\pmod{p^3}.
$$
\end{theorem}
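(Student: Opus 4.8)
The plan is to reduce both weighted sums to truncated hypergeometric sums of the central product $\binom{2k}{k}^2\binom{3k}{k}$ and then to evaluate the latter modulo $p^3$ by $p$-adic analysis. Expanding by the duplication and triplication formulas gives $\binom{2k}{k}^2\binom{3k}{k}=108^k\,(1/2)_k(1/3)_k(2/3)_k/(1)_k^3$, so Rogers' cubic transformation quoted above becomes $\sum_{n\ge0}D_nu^n=(1-4u)^{-1}\,{}_3F_2(1/2,1/3,2/3;1,1;108u^2/(1-4u)^3)$. At $u=1/16$ the inner argument equals exactly $1$, so the series $\sum_nD_n/16^n$ is a constant multiple of $\sum_k\binom{2k}{k}^2\binom{3k}{k}/108^k$; applying $u\,d/du$ to Rogers' identity and specializing at $u=1/16$ gives the companion relation for the weight $k$. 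Thus $\sum_k(3k+1)D_k/16^k$ should become an explicit linear combination of $\sum_k\binom{2k}{k}^2\binom{3k}{k}/108^k$ and $\sum_k k\binom{2k}{k}^2\binom{3k}{k}/108^k$. The first thing I would do is prove the \emph{finite} analogues of these identities valid modulo $p^3$ (passage from the infinite series to $\sum_{k=0}^{p-1}$ does not commute with differentiation), certifying them by the Zeilberger/WZ method or extracting them from the polynomial form of Rogers' transformation; the weight $3k+1$ here is the same one making $k(3k+1)=3k^2+k$ Gosper-summable in the Mu--Sun telescoping result, which is encouraging.

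For the $4^k$-sum the substitution $u=1/4$ is singular in Rogers' identity, so $\sum_{k=0}^{p-1}(3k+2)D_k/4^k$ must be handled separately. The factor $2$ together with the shift of the weight from $3k+1$ to $3k+2$ strongly suggests that the congruence $\sum_{k=0}^{p-1}(3k+2)D_k/4^k\equiv2\sum_{k=0}^{p-1}(3k+1)D_k/16^k\pmod{p^3}$ is itself the reduction of a single finite identity; establishing this direct modular relation first would collapse the theorem to one evaluation, namely of the $16^k$-side. I would try to isolate this relation before touching the $p$-adic estimates.

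The core is then the evaluation modulo $p^3$, for $p\equiv1\pmod3$, of $\sum_{k=0}^{p-1}\binom{2k}{k}^2\binom{3k}{k}/108^k$ and of its $k$-weighted companion. Here I would write each ratio $(1/2)_k(1/3)_k(2/3)_k/(1)_k^3$ through the $p$-adic Gamma function and expand to second order, using the reflection formula $\Gamma_p(x)\Gamma_p(1-x)=(-1)^{a_0(x)}$ and the Taylor expansion $\Gamma_p(x+pt)\equiv\Gamma_p(x)(1+pt\,G_1(x)+\cdots)\pmod{p^2}$. Because $p\equiv1\pmod3$ the parameters $1/3,2/3$ have $p$-integral shifts, and one of the Pochhammer numerators acquires a factor $p$ once $k$ passes $(p-1)/3$ (and a second factor once a further threshold is crossed); the $O(1)$ and $O(p)$ parts of the weighted combination are forced to cancel, and the surviving $O(p^2)$ contribution is a Jacobi-sum-type quantity coming from the terms in which two of the three numerator Pochhammers are divisible by $p$. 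Finally I would convert that quantity into the closed form $2p^2\binom{(p-1)/2}{(p-1)/6}^{-2}$ using the evaluation of $\binom{(p-1)/2}{(p-1)/6}=((p-1)/2)!\big/\big(((p-1)/6)!\,((p-1)/3)!\big)$ through $\Gamma_p(1/2),\Gamma_p(1/6),\Gamma_p(1/3)$, which are precisely the Gamma values produced after the reflection formulas are applied.

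I expect the decisive obstacle to be exactly this last evaluation pushed to modulus $p^3$. Since the answer has $p$-adic valuation $2$, every term of the truncated sum must be expanded two orders in $p$ and \emph{all} contributions of order below $p^2$ must be shown to vanish. Controlling that cancellation means combining the second-order $\Gamma_p$-coefficients with identities for the harmonic sums $\sum1/k$ and $\sum1/k^2$ and for $B_{p-3}$, and then matching the result against the $p$-expansion of the inverse binomial on the right-hand side. It is this bookkeeping, rather than any single conceptual move, where the real difficulty lies; the reduction steps above are comparatively safe once the requisite finite identities are in hand.
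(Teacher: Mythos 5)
Your overall architecture --- the cubic transformation as the bridge from Domb numbers to $\binom{2k}k^2\binom{3k}k$, followed by $p$-adic Gamma evaluation --- points in the same direction as the paper, but your pivotal reduction is not the one the finite identities actually deliver, and this is a genuine gap. The paper works with the finite avatars of Rogers' identity (Chan--Zudilin: $D_n=\sum_{k=0}^n(-1)^k\binom{n+2k}{3k}\binom{2k}k^2\binom{3k}k16^{n-k}$, and Sun's companion with $4^{n-2k}$), swaps the order of summation, and telescopes the inner sum to $\frac{(3p-1)(3j+1)}{3j+2}\binom{p+2j}{3j+1}$. The decisive step is then the elementary lemma $(3j+1)\binom{3j}j\binom{p+2j}{3j+1}\equiv p(-1)^j(1+pH_{2j}-pH_j)\pmod{p^3}$ for $0\le j\le(p-1)/2$: the factor $\binom{3j}j$ is consumed, the sign cancels, and what survives is the ${}_2F_1$-type object $p\sum_j\binom{2j}j^2 16^{-j}(1+pH_{2j}-pH_j)/(3j+2)$ --- \emph{not} a linear combination of your two truncated ${}_3F_2$-type sums $\sum_{k<p}\binom{2k}k^2\binom{3k}k/108^k$ and $\sum_{k<p}k\binom{2k}k^2\binom{3k}k/108^k$. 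Your route to that two-term combination is differentiation of Rogers' identity at $u=1/16$, where the $k$-weighted series diverges (the terms of the ${}_3F_2$ at argument $1$ decay like $k^{-3/2}$), so the relation is purely formal, and there is no reason its naive truncation at $p$ survives modulo $p^3$; certifying a correct finite analogue by WZ/Sigma leads straight back to the $\binom{p+2j}{3j+1}$ form with harmonic-number weights $H_{2j}-H_j$, which are not expressible through weights $1$ and $k$ alone. Likewise your hoped-for shortcut of first proving $\sum_{k<p}(3k+2)D_k/4^k\equiv2\sum_{k<p}(3k+1)D_k/16^k\pmod{p^3}$ as a single finite relation has no identity behind it in the paper: the two sides are evaluated independently, from the two \emph{different} transformations (with inner closed form $\frac{(3p+1)(3j+1)}{3j+2}\binom{p+j}{3j+1}$ on the $4^k$ side), and the factor $2$ only emerges after both $\Gamma_p$ computations are complete.

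The second concrete danger is quantitative. On the $16^k$ side the range $(p+1)/2\le j\le p-1$, where $(3j+1)\binom{3j}j\binom{p+2j}{3j+1}\equiv2p^2(-1)^j(H_{2j}-H_j)\pmod{p^3}$, is not negligible even though $\binom{2j}j\equiv0\pmod p$ there: at the single index $j=(2p-2)/3$ the denominator $3j+2=2p$ sheds a power of $p$ and $H_{2j}$ contains the term $1/p$, so this one term contributes $p\binom{-1/2}{(2p-2)/3}^2\bigl(H_{(4p-4)/3}-H_{(2p-2)/3}\bigr)\equiv9p^2\binom{(p-1)/2}{(p-1)/6}^{-2}\pmod{p^3}$ --- of exactly the same order as the answer, converting the main-range value $-10p^2\binom{(p-1)/2}{(p-1)/6}^{-2}$ into the final $p^2\binom{(p-1)/2}{(p-1)/6}^{-2}$. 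You gesture at such ``two of the three Pochhammers divisible by $p$'' contributions, but without locating and computing this exceptional index your method cannot produce the right constant. Finally, even in the main range the mod-$p^3$ work is not raw $\Gamma_p$ bookkeeping: after the reduction $\binom{2j}j^2 16^{-j}\equiv(-1)^j\binom{(p-1)/2}{j}\binom{(p-1)/2+j}{j}\pmod{p^2}$, the paper needs the closed-form Sigma identities $\sum_{k=0}^n\binom nk\binom{n+k}k\frac{(-1)^k}{3k+2}=\frac1{3n+2}\prod_{k=1}^n\frac{3k-2}{3k-1}$ and its harmonic-weighted companion, together with the congruence $p\sum_{k=1}^{(p-1)/2}\frac{(2/3)_k}{k(1/3)_k}\equiv-\frac32\pmod p$, itself proved via another Sigma identity and the reflection formula for $\Gamma_p$. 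None of these ingredients appear in your sketch, so while the plan is aimed in a reasonable direction, it is missing precisely the steps where the theorem is won.
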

We also proof the following two conjectures of Z.-H. Sun in \cite[Conjecture 3.5, Conjecture 3.6]{s2111}: First, Sun defined that
$$
R_3(p)=\left(1+2p+\frac43(2^{p-1}-1)-\frac32(3^{p-1}-1)\right)\binom{\frac{p-1}2}{\lfloor p/6\rfloor}^2.
$$
\begin{theorem}\label{Th1.3} Let $p>3$ be a prime. Then
\begin{align*}
\sum_{k=0}^{p-1}k^2\frac{D_k}{4^k}\equiv\begin{cases}\frac{16}9x^2-\frac{8p}9-\frac{7p^2}{18x^2} \pmod{p^3} &\tt{if}\ \textit{p}=\textit{x}^2+3\textit{y}^2\equiv1\pmod 3,
\\ -\frac{20}9R_3(p) \pmod{p^2} &\tt{if}\ \textit{p}\equiv2\pmod3,\end{cases}\\
\sum_{k=0}^{p-1}k^2\frac{D_k}{16^k}\equiv\begin{cases}\frac{4}9x^2-\frac{2p}9-\frac{p^2}{18x^2} \pmod{p^3} &\tt{if}\ \textit{p}=\textit{x}^2+3\textit{y}^2\equiv1\pmod 3,
\\ \frac{4}9R_3(p) \pmod{p^2} &\tt{if}\ \textit{p}\equiv2\pmod3,\end{cases}
\end{align*}
and if $p\equiv2\pmod3$, 
\begin{align}
\sum_{k=0}^{p-1}k\frac{D_k}{4^k}\equiv-\sum_{k=0}^{p-1}k\frac{D_k}{16^k}\equiv\frac{4}3R_3(p) \pmod{p^2}\label{kdk4k}.
\end{align}
\end{theorem}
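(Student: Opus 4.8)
The engine of the argument is the three-term recurrence satisfied by the Domb numbers,
$$n^3D_n=2(2n-1)(5n^2-5n+2)D_{n-1}-64(n-1)^3D_{n-2}\qquad(n\ge2).$$
Setting $S_j^{(m)}=\sum_{k=0}^{p-1}k^jD_k/m^k$, the plan is to divide this identity by $m^n$, sum over $1\le n\le p-1$, and reindex the two right-hand sums by $n\mapsto n+1$ and $n\mapsto n+2$. The coefficient of the weight-three sum $S_3^{(m)}$ that survives on the right is $20/m-64/m^2=(20m-64)/m^2$, which equals the left-hand coefficient $1$ exactly when $m^2-20m+64=(m-4)(m-16)=0$; this is precisely why $m=4$ and $m=16$ are the distinguished moduli, and for them $S_3^{(m)}$ cancels. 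Collecting what remains leaves the two relations
$$\tfrac92S_2^{(4)}+\tfrac{15}2S_1^{(4)}+3S_0^{(4)}=B_4,\qquad \tfrac98S_2^{(16)}+\tfrac38S_1^{(16)}=B_{16},$$
where $B_4,B_{16}$ are explicit boundary terms; note that the weight-zero sum drops out entirely from the modulus-$16$ relation.

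Next I would evaluate the boundary. Because the shifted coefficients produced by the reindexing are exactly $4(m+1)^3$ and $64(m+1)^3$, both $B_4$ and $B_{16}$ reduce to one and the same combination of $D_{p-1}$ and $D_{p-2}$, and (writing $R(n)=2(2n-1)(5n^2-5n+2)$) they vanish modulo $p^3$ precisely when
$$D_{p-2}\equiv\frac{R(p)}{64(p-1)^3}\,D_{p-1}\pmod{p^3}.$$
I would establish this from the proven form of Conjecture \ref{Conj1}, namely $D_{p-1}\equiv64^{p-1}\pmod{p^3}$, together with a companion congruence for $D_{p-2}$ modulo $p^3$ drawn from the same circle of ideas. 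Granting it, $B_4\equiv B_{16}\equiv0\pmod{p^3}$, and the two displayed relations become clean linear identities among $S_0^{(m)},S_1^{(m)},S_2^{(m)}$.

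For $p\equiv1\pmod3$ the remaining inputs are already in hand. Theorem \ref{Th1.1} supplies $S_0^{(4)}\equiv S_0^{(16)}\equiv4x^2-2p-p^2/(4x^2)$, while Theorem \ref{Th1.2} supplies $3S_1^{(4)}+2S_0^{(4)}$ and $3S_1^{(16)}+S_0^{(16)}$, hence $S_1^{(4)}$ and $S_1^{(16)}$ modulo $p^3$. Solving the boundary-free relations gives $S_2^{(4)}=-\tfrac53S_1^{(4)}-\tfrac23S_0^{(4)}$ and $S_2^{(16)}=-\tfrac13S_1^{(16)}$, and a direct substitution reproduces the stated formulas once one rewrites $p^2\binom{(p-1)/2}{(p-1)/6}^{-2}$ as $p^2/(4x^2)$ via the classical congruence $\binom{(p-1)/2}{(p-1)/6}^2\equiv4x^2\pmod p$.

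The case $p\equiv2\pmod3$ is where the real difficulty lies, because Theorem \ref{Th1.2} is unavailable and the weight-zero sums are $\equiv0\pmod{p^2}$, so the recurrence relations by themselves only express $S_2^{(m)}$ in terms of $S_1^{(m)}$. Thus I must first prove the weight-one congruence \eqref{kdk4k} directly: expanding $D_k$ by its defining double sum $\sum_j\binom kj^2\binom{2j}j\binom{2k-2j}{k-j}$, interchanging the order of summation, and reducing the inner sums modulo $p^2$ to hypergeometric expressions evaluated through the $p$-adic Gamma function, which is what produces the Fermat-quotient terms $2^{p-1}-1$, $3^{p-1}-1$ and the factor $\binom{(p-1)/2}{\lfloor p/6\rfloor}^2$ in $R_3(p)$. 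I expect this to be the main obstacle, since \eqref{kdk4k} is a genuinely new congruence not reducible to the earlier theorems and controlling the weight-one twist modulo $p^2$ demands the full $p$-adic machinery. Once \eqref{kdk4k} is established, feeding $S_1^{(4)}\equiv-S_1^{(16)}\equiv\tfrac43R_3(p)$ into the boundary-free relations yields $S_2^{(4)}\equiv-\tfrac53\cdot\tfrac43R_3(p)=-\tfrac{20}9R_3(p)$ and $S_2^{(16)}\equiv-\tfrac13\cdot(-\tfrac43R_3(p))=\tfrac49R_3(p)$, completing the proof.
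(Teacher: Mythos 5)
Your recurrence scheme is sound, and for the two $k^2$ congruences in the case $p\equiv1\pmod3$ it is a genuinely different and arguably slicker route than the paper's. I checked the algebra: summing $n^3D_n=R(n)D_{n-1}-64(n-1)^3D_{n-2}$ with $R(n)=2(2n-1)(5n^2-5n+2)$ over $1\le n\le p-1$ and reindexing does give your two relations $\frac92S_2^{(4)}+\frac{15}2S_1^{(4)}+3S_0^{(4)}=B_4$ and $\frac98S_2^{(16)}+\frac38S_1^{(16)}=B_{16}$, with the $S_3^{(m)}$ coefficient vanishing exactly at the roots of $m^2-20m+64$. Moreover the boundary is even better than you anticipate: collecting terms, $B_m=\frac{p^3}{m^p}\bigl(D_p-\frac{64}{m}D_{p-1}\bigr)$, because the combination $R(p)D_{p-1}-64(p-1)^3D_{p-2}$ you propose to control equals $p^3D_p$ by the recurrence at $n=p$; hence $B_4\equiv B_{16}\equiv0\pmod{p^3}$ trivially, and no companion congruence for $D_{p-2}$ (nor the proven Conjecture \ref{Conj1}) is needed. (Your parenthetical description of the shifted coefficients as $4(m+1)^3$ and $64(m+1)^3$ is garbled, but the relations you wrote are the correct ones.) Feeding Theorems \ref{Th1.1} and \ref{Th1.2} into $S_2^{(4)}=-\frac53S_1^{(4)}-\frac23S_0^{(4)}$ and $S_2^{(16)}=-\frac13S_1^{(16)}$, together with the classical $\binom{(p-1)/2}{(p-1)/6}^2\equiv4x^2\pmod p$, indeed reproduces $\frac{16}9x^2-\frac{8p}9-\frac{7p^2}{18x^2}$ and $\frac49x^2-\frac{2p}9-\frac{p^2}{18x^2}$ modulo $p^3$; the paper instead re-runs the transformations (\ref{2.0}) and (\ref{2.1}) with \texttt{Sigma}-certified closed forms for $\sum_k k^2\binom{k+j}{3j}$ and $\sum_k k^2\binom{k+2j}{3j}$ and evaluates the resulting central-binomial sums afresh via Lemma \ref{Lem2.2}, Lemma \ref{p2j}, \cite{mp3}, \cite[(3.5)]{s2018} and \cite{YMK}. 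Your route buys the weight-two case for free from weights zero and one; the paper's buys uniformity across all weights and both residue classes.

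The genuine gap is the case $p\equiv2\pmod3$, where your entire argument rests on (\ref{kdk4k}) and you do not prove it: since $S_0^{(4)}, S_0^{(16)}\equiv0\pmod{p^2}$ there, both $k^2$ congruences reduce (as you correctly note) to the weight-one congruence, which is moreover one of the three assertions of the theorem itself. Your sketch — expand $D_k$ by its defining double sum, interchange, evaluate the inner sums by $\Gamma_p$ — would stall at the first step: after interchanging, the inner sums are the partial sums $\sum_{k=j}^{p-1}k\binom kj^2\binom{2k-2j}{k-j}4^{-k}$, which admit no closed form, so there is nothing for the $p$-adic Gamma machinery to act on. The route that actually works (the paper's) is structurally different: one first applies (\ref{2.1}) (resp.\ (\ref{2.0})) so that the inner sum telescopes to a single binomial, $\sum_{k=2j}^{p-1}k\binom{k+j}{3j}=\frac{3pj+p-j-1}{3j+2}\binom{p+j}{3j+1}$ and its $\binom{p+2j}{3j+1}$-analogue, then reduces via Lemma \ref{Lem2.2} and Lemma \ref{p2j} to $\pm\frac p3\sum_{k=0}^{(p-1)/2}\binom{2k}k^2/\bigl((3k+2)16^k\bigr)$ modulo $p^2$ (the harmonic-number corrections being absorbed by \eqref{h2khk16k}), and finally evaluates this central-binomial sum through \eqref{p23p2}--\eqref{zuhep2}; that last evaluation is precisely where $q_p(2)$, $q_p(3)$ and $\binom{(p-1)/2}{\lfloor p/6\rfloor}^2$ — that is, $R_3(p)$ — enter. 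You flag this as the main obstacle yourself, but flagging it does not fill it: as written, the proposal proves the theorem only for $p\equiv1\pmod3$, and neither the $-\frac{20}9R_3(p)$ and $\frac49R_3(p)$ congruences nor (\ref{kdk4k}) itself is established.
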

\begin{remark}
We also can prove the other two congruences in \cite[Conjecture 3.5, Conjecture 3.6]{s2111}, but the process of the proof is complex, so we will not give the details in this paper. Z.-H. Sun (private communication) conjectured (\ref{kdk4k}) which was not given public.
\end{remark}
We are going to prove Theorems \ref{Th1.1} and \ref{Th1.2} in Sections 2 and 3 respectively. Section 4 is devoted to proving Theorem \ref{Th1.3}. Our proofs make use of some combinatorial identities which can be found and proved by the package \texttt{Sigma} \cite{S} via the software \texttt{Mathematica}. We also rely on the $p$-adic Gamma function, Gamma function.
 \section{Proof of Theorem 1.1}
 \setcounter{lemma}{0}
\setcounter{theorem}{0}
\setcounter{corollary}{0}
\setcounter{remark}{0}
\setcounter{equation}{0}
\setcounter{conjecture}{0}
For a prime $p$, let  $\mathbb{Z}_p$ denote the ring of all $p$-adic integers and let $\mathbb{Z}_p^{\times}:=\{a\in\mathbb{Z}_p:\,a\text{ is prime to }p\}.$
For each $\alpha\in\mathbb{Z}_p$, define the $p$-adic order $\nu_p(\alpha):=\max\{n\in\mathbb{N}:\, p^n\mid \alpha\}$ and the $p$-adic norm $|\alpha|_p:=p^{-\nu_p(\alpha)}$. Define the $p$-adic gamma function $\Gamma_p(\cdot)$ by
$$
\Gamma_p(n)=(-1)^n\prod_{\substack{1\leq k<n\\ (k,p)=1}}k,\qquad n=1,2,3,\ldots,
$$
and
$$
\Gamma_p(\alpha)=\lim_{\substack{|\alpha-n|_p\to 0\\ n\in\mathbb{N}}}\Gamma_p(n),\qquad \alpha\in\\mathbb{Z}_p.
$$
In particular, we set $\Gamma_p(0)=1$. Following, we need to use the most basic properties of $\Gamma_p$, and all of them can be found in \cite{Murty02,Robert00}.
For example, we know that
\begin{equation}\label{Gammap}
\frac{\Gamma_p(x+1)}{\Gamma_p(x)}=\begin{cases}-x,&\text{if }|x|_p=1,\\
-1,&\text{if }|x|_p>1.
\end{cases}
\end{equation}
\begin{align}\label{Gammap1xx}
\Gamma_p(1-x)\Gamma_p(x)=(-1)^{a_0(x)},
\end{align}
where $a_0(x)\in\{1,2,\ldots,p\}$ such that $x\equiv a_0(x)\pmod p$. Among the properties we need here is the fact that for any positive integer $n$,
\begin{equation}\label{pn}
z_1\equiv z_2\pmod{p^n}\ \ \ \mbox{implies}\ \ \ \Gamma_p(z_1)\equiv\Gamma_p(z_2)\pmod{p^n}.
\end{equation}
Our proof of Theorem \ref{Th1.1} heavily relies on the following two transformation formulas due to Chan and Zudilin \cite{CZ} and Sun \cite{sund} respectively,
 \begin{align}\label{2.0}\sum_{k=0}^n\binom{n}k^2\binom{2k}k\binom{2n-2k}{n-k}=\sum_{k=0}^{n}(-1)^k\binom{n+2k}{3k}\binom{2k}k^2\binom{3k}k16^{n-k},
 \end{align}
 \begin{align}\label{2.1}\sum_{k=0}^n\binom{n}k^2\binom{2k}k\binom{2n-2k}{n-k}=\sum_{k=0}^{\lfloor n/2\rfloor}\binom{n+k}{3k}\binom{2k}k^2\binom{3k}k4^{n-2k}.
 \end{align}
 \begin{lemma}\label{sunh}{\rm (\cite{s2000,s2008})} Let $p>5$ be a prime. Then
\begin{gather*}
H_{p-1}^{(2)}\equiv0\pmod{p},\ \ H_{\frac{p-1}2}^{(2)}\equiv0\pmod{p},\ \ H_{p-1}\equiv0\pmod{p^2},\\
\frac15H_{\lfloor\frac{p}6\rfloor}^{(2)}\equiv H_{\lfloor\frac{p}3\rfloor}^{(2)}\equiv\frac12\left(\frac{p}3\right)B_{p-2}\left(\frac13\right)\pmod p,\\
H_{\lfloor\frac{p}6\rfloor}\equiv-2q_p(2)-\frac32q_p(3)+pq^2_p(2)+\frac{3p}4q^2_p(3)-\frac{5p}{12}\left(\frac{p}3\right)B_{p-2}\left(\frac13\right)\pmod{p^2},\\
H_{\lfloor\frac{p}3\rfloor}\equiv-\frac32q_p(3)+\frac{3p}4q^2_p(3)-\frac{p}6\left(\frac{p}3\right)B_{p-2}\left(\frac13\right)\pmod{p^2},\\
H_{\frac{p-1}2}\equiv-2q_p(2)+pq^2_p(2)\pmod{p^2},\ H_{\lfloor\frac{p}4\rfloor}^{(2)}\equiv(-1)^{\frac{p-1}2}4E_{p-3}\pmod p,\\
H_{\lfloor\frac{2p}3\rfloor}\equiv-\frac32q_p(3)+\frac{3p}4q^2_p(3)+\frac{p}3\left(\frac{p}3\right)B_{p-2}\left(\frac13\right)\pmod{p^2}.
\end{gather*}
\end{lemma}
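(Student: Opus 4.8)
The plan is to regard Lemma~\ref{sunh} as a compendium of classical truncated harmonic-sum congruences, where $H_n^{(r)}=\sum_{k=1}^n k^{-r}$ and $H_n:=H_n^{(1)}$, and to derive every line from a single engine: the reduction of $H_n^{(r)}$ modulo a power of $p$ to values of Bernoulli numbers, Bernoulli polynomials, and Fermat quotients. Since the statement is explicitly attributed to Z.-H.\ Sun \cite{s2000,s2008}, the honest route for the present paper is simply to quote those results; what follows only indicates how one reconstructs them.

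I would first clear the entries that need only information modulo $p$ together with the Wolstenholme package. The congruences $H_{p-1}^{(2)}\equiv0$, $H_{(p-1)/2}^{(2)}\equiv0\pmod p$ and $H_{p-1}\equiv0\pmod{p^2}$ are Wolstenholme's theorem combined with the pairing $k\leftrightarrow p-k$, which identifies $\sum_{k=1}^{(p-1)/2}k^{-2}$ with $\tfrac12 H_{p-1}^{(2)}$. For the remaining second-order sums modulo $p$ I would use $k^{-2}\equiv k^{p-3}\pmod p$, so that Faulhaber's formula gives $H_n^{(2)}\equiv\tfrac{1}{p-2}\bigl(B_{p-2}(n+1)-B_{p-2}\bigr)\pmod p$. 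With $n=\lfloor p/3\rfloor$ or $\lfloor p/6\rfloor$ the argument reduces modulo $p$ to $2/3$ or $5/6$; the reflection law $B_{p-2}(1-x)=-B_{p-2}(x)$, the vanishing $B_{p-2}=0$, and the multiplication formula $B_{p-2}(1/6)\equiv5\,B_{p-2}(1/3)\pmod p$ then collapse everything onto a single multiple of $B_{p-2}(1/3)$, the Legendre symbol $\left(\tfrac p3\right)$ and the factor $\tfrac15$ being exactly these two normalizations. The Euler-number line is the same computation with $n=\lfloor p/4\rfloor$, using the classical link between $B_{p-2}(1/4)$ and $E_{p-3}$ (writing $p-2=2m+1$).

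The substantive part is the set of first-order sums modulo $p^2$, namely $H_{(p-1)/2}$, $H_{\lfloor p/3\rfloor}$, $H_{\lfloor p/6\rfloor}$ and $H_{\lfloor 2p/3\rfloor}$. Here $k^{-1}\equiv k^{p-2}$ holds only modulo $p$, so I would instead expand each sum to second order in $p$; this is precisely where the Fermat quotients $q_p(2),q_p(3)$ and their squares enter, while the $B_{p-2}(1/3)$ terms appear as the modulo-$p$ corrections already computed above. For $m=6$ the economical move is not to attack $\lfloor p/6\rfloor$ head-on but to recover it by an inclusion--exclusion decomposition of the residues in $(0,p/6]$ in terms of the $m=2,3$ ranges, so that the values of $H_{(p-1)/2}$ and $H_{\lfloor p/3\rfloor}$ feed into $H_{\lfloor p/6\rfloor}$ and $H_{\lfloor 2p/3\rfloor}$. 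The main obstacle is exactly this modulo-$p^2$ bookkeeping: the quadratic Fermat-quotient contributions and the Bernoulli-polynomial correction must be tracked simultaneously and their coefficients pinned down, which is delicate --- and is the reason such congruences are invariably quoted rather than re-derived in applications. Accordingly, for the proof of Theorem~\ref{Th1.1} I would simply cite \cite{s2000,s2008} and pass on.
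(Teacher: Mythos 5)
Your proposal coincides with the paper's treatment: Lemma \ref{sunh} is given there without proof, purely as a quotation of Z.-H.\ Sun's results from \cite{s2000,s2008}, which is exactly the route you settle on. Your reconstruction sketch is also sound as an outline --- Wolstenholme plus the pairing $k\leftrightarrow p-k$ for the first line, the Faulhaber reduction $H_n^{(2)}\equiv\frac{1}{p-2}\left(B_{p-2}(n+1)-B_{p-2}\right)\pmod p$ combined with $B_{p-2}(1-x)=-B_{p-2}(x)$ and $B_{p-2}\left(\frac16\right)\equiv 5B_{p-2}\left(\frac13\right)\pmod p$ for the second-order sums, and Fermat-quotient expansions for the first-order sums modulo $p^2$ --- and it is consistent with how the cited papers actually establish these congruences.
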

\begin{lemma}\label{Lem2.2} Let $p>2$ be a prime and $p\equiv1\pmod3$. If $0\leq j\leq (p-1)/2$, then we have
$$
\binom{3j}j\binom{p+j}{3j+1}\equiv \frac{p}{3j+1}(1-pH_{2j}+pH_j)\pmod{p^3}.
$$
\end{lemma}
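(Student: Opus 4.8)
The plan is to expand $\binom{p+j}{3j+1}$ directly as a product of $3j+1$ consecutive integers straddling $p$ and to isolate the single factor equal to $p$. Writing
$$
\binom{p+j}{3j+1}=\frac{1}{(3j+1)!}\prod_{i=-2j}^{j}(p+i),
$$
the factor with $i=0$ contributes $p$, while the remaining factors split as $\prod_{i=1}^{j}(p+i)\cdot\prod_{i=1}^{2j}(p-i)=j!\,(2j)!\,P$, where $P=\prod_{i=1}^{j}(1+p/i)\prod_{i=1}^{2j}(1-p/i)$. Since $0\le j\le(p-1)/2$ forces $2j\le p-1$, every denominator occurring here is prime to $p$, so $P$ and the harmonic sums $H_j,H_{2j},H_j^{(2)},H_{2j}^{(2)}$ are all $p$-adic integers. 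Combining this with the elementary identity $j!\,(2j)!/(3j+1)!=1/\big((3j+1)\binom{3j}{j}\big)$ would yield the exact rational identity $\binom{3j}{j}\binom{p+j}{3j+1}=\frac{p}{3j+1}P$.

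Next I would expand $P$ modulo $p^3$. Multiplying out the two products gives
$$
P\equiv 1+p(H_j-H_{2j})+p^2Q\pmod{p^3},\qquad Q=\tfrac12(H_j-H_{2j})^2-\tfrac12\big(H_j^{(2)}+H_{2j}^{(2)}\big).
$$
Hence the difference $\binom{3j}{j}\binom{p+j}{3j+1}-\frac{p}{3j+1}(1-pH_{2j}+pH_j)$ equals $\frac{p}{3j+1}\cdot p^2Q$ up to terms of $p$-adic valuation at least $3$. Because $1\le 3j+1<2p$, the quantity $3j+1$ is divisible by $p$ only when $3j+1=p$, i.e.\ $j=(p-1)/3$ (an integer precisely because $p\equiv1\pmod3$). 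For every other $j$ we have $p\nmid 3j+1$, so $\frac{p}{3j+1}\cdot p^2Q$ has valuation at least $3$ and the asserted congruence follows at once.

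The only delicate case, and the one I expect to be the main obstacle, is $j=(p-1)/3$: here $\frac{p}{3j+1}=1$, the $p^2$-term is no longer absorbed, and I must show $Q\equiv0\pmod p$. For this value of $j$ one has $2j=p-1-j$, so applying the substitution $k=p-i$ to the tail sums and invoking $H_{p-1}\equiv0\pmod{p^2}$ together with $H_{p-1}^{(2)}\equiv0\pmod p$ from Lemma \ref{sunh} would give $H_{2j}\equiv H_j\pmod p$ and $H_{2j}^{(2)}\equiv -H_j^{(2)}\pmod p$. Substituting these into $Q$ gives $Q\equiv\frac12(H_j-H_{2j})^2-\frac12\big(H_j^{(2)}-H_j^{(2)}\big)\equiv0\pmod p$, which would dispose of the edge case and complete the proof.
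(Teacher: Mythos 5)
Your proof is correct, and while your generic case ($p\nmid 3j+1$) coincides with the paper's own expansion of the product $(p+j)\cdots(p+1)\,p\,(p-1)\cdots(p-2j)$, your treatment of the delicate case $j=(p-1)/3$ takes a genuinely different and more elementary route. The paper evaluates both sides of the congruence explicitly at this $j$: it rewrites the left side as $\binom{p-1}{(p-1)/3}\binom{p+(p-1)/3}{(p-1)/3}$, expands each binomial to second order to obtain $1-p^2H_{(p-1)/3}^{(2)}$, and then invokes Z.-H. Sun's congruences from Lemma \ref{sunh} for $H_{\lfloor p/3\rfloor}^{(2)}$, $H_{\lfloor p/3\rfloor}$ and $H_{\lfloor 2p/3\rfloor}$ to show that both sides equal $1-\frac{p^2}{2}\left(\frac{p}{3}\right)B_{p-2}\left(\frac{1}{3}\right)\pmod{p^3}$. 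You instead keep the uniform exact identity $\binom{3j}{j}\binom{p+j}{3j+1}=\frac{p}{3j+1}P$ and reduce the edge case to showing $Q\equiv0\pmod p$, which you settle by the reflection $k\mapsto p-k$ (available precisely because $2j=p-1-j$ when $j=(p-1)/3$) combined with the Wolstenholme-type facts $H_{p-1}\equiv0\pmod p$ and $H_{p-1}^{(2)}\equiv0\pmod p$; this bypasses the Bernoulli-polynomial values $B_{p-2}(1/3)$ entirely, at the cost of not exhibiting the common explicit value of the two sides (which is not needed anyway). Your second-order coefficient $Q=\frac12(H_j-H_{2j})^2-\frac12\bigl(H_j^{(2)}+H_{2j}^{(2)}\bigr)$ is indeed the correct $p^2$-coefficient of $\prod_{i=1}^{j}(1+p/i)\prod_{i=1}^{2j}(1-p/i)$, the observation $2j\le p-1$ secures $p$-adic integrality of all quantities involved, and in fact your relation $H_{2j}\equiv H_j\pmod p$ gives $(H_j-H_{2j})^2\equiv0\pmod{p^2}$, slightly more than you need; so the argument is complete, and arguably cleaner than the paper's in that it trades deep harmonic-sum evaluations for an elementary symmetry.
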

\begin{proof} If $0\leq j\leq (p-1)/2$ and $j\neq(p-1)/3$, then we have
\begin{align*}
\binom{3j}j\binom{p+j}{3j+1}&=\frac{(p+j)\cdots(p+1)p(p-1)\cdots(p-2j)}{j!(2j)!(3j+1)}\\
&\equiv\frac{pj!(1+pH_j)(-1)^{2j}(2j)!(1-pH_{2j})}{j!(2j)!(3j+1)}\\
&\equiv\frac{p}{3j+1}(1-pH_{2j}+pH_j)\pmod{p^3}.
\end{align*}
If $j=(p-1)/3$, then by Lemma \ref{sunh}, we have
\begin{align*}
&\binom{p-1}{\frac{p-1}3}\binom{p+\frac{p-1}3}{\frac{p-1}3}\\
\equiv&\left(1-pH_{\frac{p-1}3}+\frac{p^2}2(H_{\frac{p-1}3}^2-H_{\frac{p-1}3}^{(2)})\right)\left(1+pH_{\frac{p-1}3}+\frac{p^2}2(H_{\frac{p-1}3}^2-H_{\frac{p-1}3}^{(2)})\right)\\
\equiv&1-p^2H_{\frac{p-1}3}^{(2)}\equiv1-\frac{p^2}2\left(\frac{p}3\right)B_{p-2}\left(\frac13\right)\pmod{p^3}
\end{align*}
and
$$
1-pH_{\frac{2p-2}3}+pH_{\frac{p-1}3}\equiv1-\frac{p^2}2\left(\frac{p}3\right)B_{p-2}\left(\frac13\right)\pmod{p^3}.
$$
Now the proof of Lemma \ref{Lem2.2} is complete.
\end{proof}
\begin{lemma}\label{mpt} Let $p>3$ be a prime. For any $p$-adic integer $t$, we have
\begin{equation*}
 \binom{\frac{2p-2}3+pt}{\frac{p-1}2}\equiv\binom{\frac{2p-2}3}{\frac{p-1}2}\left(1+pt(H_{\frac{2p-2}3}-H_{\frac{p-1}6})\right)\pmod{p^2}.
 \end{equation*}
 \end{lemma}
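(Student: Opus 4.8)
The plan is to reduce the statement to a direct expansion of a ratio of falling factorials. Writing $a=\frac{2p-2}3$ and $b=\frac{p-1}2$ (both integers, since the lemma is applied with $p\equiv1\pmod3$), I would start from the identity
$$
\frac{\binom{a+pt}{b}}{\binom{a}{b}}=\prod_{i=0}^{b-1}\frac{a-i+pt}{a-i},
$$
so that the whole problem comes down to controlling this finite product modulo $p^2$.

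The key step, and the only one that uses the specific values of $a$ and $b$, is to verify that every denominator $a-i$ with $0\le i\le b-1$ is coprime to $p$. For this I would compute the range of the factors: the largest is $a=\frac{2p-2}3$, attained at $i=0$, and the smallest is $a-b+1=\frac{2p-2}3-\frac{p-1}2+1=\frac{p-1}6+1$, attained at $i=b-1$. Since $\frac{p-1}6+1\ge1$ and $\frac{2p-2}3\le p-1<p$, all the factors $a-i$ lie in $\{1,2,\dots,p-1\}$ and are therefore $p$-adic units. This is exactly what makes the $p$-adic analysis clean: in contrast to the situation in Lemma \ref{Lem2.2}, no factor in the denominator is divisible by $p$, so no exceptional case of the form $j=(p-1)/3$ arises and no appeal to Lemma \ref{sunh} is needed.

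Given this, I would rewrite each factor as $1+\frac{pt}{a-i}$ and expand the product. Because $t\in\mathbb{Z}_p$ and each $a-i$ is a unit, every term containing two or more factors of $pt$ is divisible by $p^2$ and hence vanishes; only the constant and linear terms survive, giving
$$
\prod_{i=0}^{b-1}\Big(1+\frac{pt}{a-i}\Big)\equiv 1+pt\sum_{i=0}^{b-1}\frac1{a-i}\pmod{p^2}.
$$
Finally I would identify the remaining sum: as $i$ runs from $0$ to $b-1$, the quantity $a-i$ runs through the consecutive integers from $\frac{p-1}6+1$ up to $\frac{2p-2}3$, so $\sum_{i=0}^{b-1}\frac1{a-i}=H_{\frac{2p-2}3}-H_{\frac{p-1}6}$. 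Multiplying back by $\binom{a}{b}$ then yields the asserted congruence.

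The genuinely nontrivial input is the range/coprimality check in the second step; once it is in place the remainder is a one-line binomial expansion. The main obstacle I anticipate is purely bookkeeping — keeping the endpoints of the product straight so that the telescoped sum lands precisely on $H_{\frac{2p-2}3}-H_{\frac{p-1}6}$ rather than a shifted harmonic sum — but there are no delicate Bernoulli-number or $\Gamma_p$ estimates to manage here, because we never divide by a multiple of $p$.
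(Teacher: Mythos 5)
Your proof is correct and takes essentially the same approach as the paper: the paper's proof likewise writes $\binom{m+pt}{(p-1)/2}$ as the falling factorial $(m+pt)\cdots(m+pt-\frac{p-1}{2}+1)$ over $(\frac{p-1}{2})!$ with $m=\frac{2p-2}{3}$, expands linearly in $pt$ modulo $p^2$, and identifies the coefficient as $H_{m}-H_{m-\frac{p-1}{2}}=H_{\frac{2p-2}{3}}-H_{\frac{p-1}{6}}$. The only difference is presentational: you phrase it as a ratio of binomial coefficients and spell out the check that every denominator $a-i$ lies in $\{1,\dots,p-1\}$ and is thus a $p$-adic unit, a point the paper leaves implicit under ``it is easy to check.''
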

\begin{proof} Set $m=(2p-2)/3$. It is easy to check that
\begin{align*}
\binom{m+pt}{(p-1)/2}&=\frac{(m+pt)\cdots(m+pt-(p-1)/2+1)}{((p-1)/2)!}\\
&\equiv\frac{m\cdots(m-(p-1)/2+1)}{((p-1)/2)!}(1+pt(H_m-H_{m-(p-1)/2})\\
&=\binom{m}{(p-1)/2}(1+pt(H_m-H_{m-(p-1)/2})\pmod{p^2}.
\end{align*}
So Lemma \ref{mpt} is finished.
\end{proof}
 \medskip
\noindent{\it Proof of Theorem 1.1}. Firstly, we prove the first congruence.

\noindent{\it Case} $p\equiv1\pmod3$. With the help of (\ref{2.1}), we have
\begin{align}\label{3k4f}
\sum_{k=0}^{p-1}\frac{D_k}{4^k}&=\sum_{k=0}^{p-1}\frac{1}{4^k}\sum_{j=0}^{\lfloor k/2\rfloor}\binom{k+j}{3j}\binom{2j}j^2\binom{3j}j4^{k-2j}\notag\\
&=\sum_{j=0}^{(p-1)/2}\frac{\binom{2j}j^2\binom{3j}j}{16^j}\sum_{k=2j}^{p-1}\binom{k+j}{3j}.
\end{align}
By loading the package \texttt{Sigma} in the software \texttt{Mathematica}, we have the following identity:
$$
\sum_{k=2j}^{n-1}\binom{k+j}{3j}=\binom{n+j}{3j+1}.$$
Thus, replacing $n$ by $p$ in the above identity and then substitute it into (\ref{3k4f}), we have
\begin{equation*}
\sum_{k=0}^{p-1}\frac{D_k}{4^k}=\sum_{j=0}^{(p-1)/2}\frac{\binom{2j}j^2\binom{3j}j}{16^j}\binom{p+j}{3j+1}.
\end{equation*}
Hence we immediately obtain the following result by Lemma \ref{Lem2.2},
\begin{equation}\label{main1}
\sum_{k=0}^{p-1}\frac{D_k}{4^k}\equiv p\sum_{j=0}^{(p-1)/2}\frac{\binom{2j}j^2}{16^j}\frac{1-pH_{2j}+pH_j}{3j+1}\pmod{p^3}.
\end{equation}
Since $\binom{2k}k^2/16^k\equiv\binom{(p-1)/2}k\binom{(p-1)/2+k}k(-1)^k\pmod{p^2}$ for each $0\leq k\leq (p-1)/2$ , it is easy to verify that
\begin{align*}
\sum_{j=0}^{\frac{p-1}2}\frac{\binom{2j}j^2}{16^j}\frac{H_{j}-H_{2j}}{3j+1}\equiv \sum_{j=0}^{\frac{p-1}2}\frac{\binom{\frac{p-1}2}j\binom{\frac{p-1}2+j}{j}(-1)^j(H_{j}-H_{2j})}{3j+1}\pmod{p}.
\end{align*}
By \texttt{Sigma}, we found the following identity:
\begin{align}\label{equat}
\sum_{k=0}^n\frac{\binom{n}k\binom{n+k}k(-1)^k(H_{k}-H_{2k})}{3k+1}=\frac{1}{3n+1}\prod_{k=1}^n\frac{3k-1}{3k-2}\sum_{k=1}^n\frac1k\prod_{j=1}^k\frac{3j-2}{3j-1}.
\end{align}
In view of \cite{ml}, we have
\begin{gather*}
\sum_{k=1}^{\frac{p-1}3}\frac{4^k}{k\binom{2k}k}\equiv-2+\frac{2}{\binom{\frac{p-1}2}{\frac{p-1}3}}\equiv-2+\frac1x\pmod p,\\
3\sum_{j=1}^{\frac{p-1}3}\frac{4^j }{(3j-1)\binom{2j}{j}}\equiv-2+\frac1x+\frac13\binom{\frac{p-1}2}{\frac{p-1}3}\sum_{k=1}^{\frac{p-1}3}\frac{4^k}{k^2\binom{2k}k}\pmod{p}.
\end{gather*}
So by \cite[Lemma 3.1]{sijnt}, we have
\begin{align*}
&\sum_{k=1}^{\frac{p-1}2}\frac1k\prod_{j=1}^k\frac{3j-2}{3j-1}=\sum_{k=1}^{\frac{p-1}2}\frac{\binom{-1/3}k}{k\binom{-2/3}k}\equiv\frac{p}3\sum_{k=1}^{\frac{p-1}2}\frac{(-1)^k}{k^2\binom{-2/3}k}-\sum_{k=1}^{\frac{p-1}3}\frac{3}{3k-1}\\
&-3p\sum_{k=1}^{\frac{p-1}3}\frac{1}{(3k-1)^2}-\frac{p(-1)^{\frac{p-1}2}}{3\binom{\frac{2p-2}3}{\frac{p-1}2}}\binom{\frac{p-1}2}{\frac{p-1}3}\sum_{k=1}^{\frac{p-1}3}\frac{4^k}{k^2\binom{2k}k}\pmod{p^2}.
\end{align*}
It is easy to check that
\begin{align*}
\sum_{k=1}^{\frac{p-1}3}\frac{4^k}{k^2\binom{2k}k}\equiv\sum_{k=1}^{\frac{p-1}3}\frac{(-1)^k}{k^2\binom{\frac{p-1}2}k}\equiv2\sum_{k=0}^{\frac{p-4}3}\frac{(-1)^k}{(k+1)\binom{\frac{p-3}2}k}\pmod p.
\end{align*}
And by \cite[(6)]{swz}, we have
\begin{equation}\label{cyid}
\frac1{\binom{n+1+k}k}=(n+1)\sum_{r=0}^n\binom{n}r\frac{(-1)^r}{k+r+1}.
\end{equation}
Hence, setting $n=\frac{p-1}2$ in the above identity, we have
\begin{align*}
&2\sum_{k=0}^{\frac{p-4}3}\frac{(-1)^k}{(k+1)\binom{\frac{p-3}2}k}\equiv2\sum_{k=0}^{\frac{p-4}3}\frac{1}{(k+1)\binom{\frac{p+1}2}k}\\
&\equiv\sum_{k=0}^{\frac{p-4}3}\frac1{k+1}\sum_{r=0}^{\frac{p-1}2}\binom{\frac{p-1}2}r\frac{(-1)^r}{k+1+r}=\sum_{k=1}^{\frac{p-1}3}\frac1{k}\sum_{r=0}^{\frac{p-1}2}\binom{\frac{p-1}2}r\frac{(-1)^r}{k+r}\\
&=H_{\frac{p-1}3}^{(2)}+\sum_{r=1}^{\frac{p-1}2}\binom{\frac{p-1}2}r\frac{(-1)^r}r\sum_{k=1}^{\frac{p-1}3}\left(\frac1{k}-\frac1{k+r}\right)\pmod p.
\end{align*}
It is easy to obtain that
$$\sum_{k=1}^{\frac{p-1}3}\left(\frac1{k}-\frac1{k+r}\right)\equiv-\sum_{k=1}^r\frac1{k(3k-1)}\pmod{p}.$$
And by \texttt{Sigma}, we find the following identity which can be proved by induction on $n$:
$$
\sum_{r=1}^n\binom{n}r\frac{(-1)^r}r\sum_{k=1}^r\frac1{k(3k-1)}=H_n^{(2)}-\sum_{k=1}^n\frac{(-1)^k}{k^2\binom{-2/3}k}.
$$
So we have
\begin{align*}
\sum_{k=1}^{\frac{p-1}3}\frac{4^k}{k^2\binom{2k}k}&\equiv2\sum_{k=0}^{\frac{p-4}3}\frac{(-1)^k}{(k+1)\binom{\frac{p-3}2}k}\\
&\equiv H_{\frac{p-1}3}^{(2)}-H_{\frac{p-1}2}^{(2)}+\sum_{k=1}^{\frac{p-1}2}\frac{(-1)^k}{k^2\binom{-2/3}k}\pmod p.
\end{align*}
Then by \cite[Theorem4.12]{YMK} and Lemma \ref{sunh}, we have
\begin{align*}
&\sum_{k=1}^{\frac{p-1}2}\frac1k\prod_{j=1}^k\frac{3j-2}{3j-1}\equiv-\sum_{k=1}^{\frac{p-1}3}\frac{3}{3k-1}-3p\sum_{k=1}^{\frac{p-1}3}\frac{1}{(3k-1)^2}-\frac{p}3H_{\frac{p-1}3}^{(2)}\\
&\equiv3\sum_{k=1}^{\frac{p-1}3}\frac1{3k-1}-\frac{p}3H_{\frac{p-1}3}^{(2)}\equiv 0\pmod{p^2},
\end{align*}
where we used \cite[Lemma 2.3, Lemma 2.6, Lemma 2.7]{s2008}, which help us deduce that
\begin{align*}
&\sum_{k=1}^{\frac{p-1}3}\frac1{3k-1}=\sum_{k=1\atop k\equiv2\pmod3}^{p-1}\frac1k\\
&\equiv\frac{B_{\varphi(p^3)}\left(\frac13\right)-B_{\varphi(p^3)}\left(\frac23\right)}{3\varphi(p^3)}+\frac{p}9\left(\frac{B_{2p-3}(\frac13)}{2p-3}-2\frac{B_{p-2}(\frac13)}{p-2}\right)\\
&=0+\frac{p}9\left(\frac{B_{p-1+p-2}(\frac13)}{p-1+p-2}-2\frac{B_{p-2}(\frac13)}{p-2}\right)\equiv-\frac{p}9\frac{B_{p-2}(\frac13)}{p-2}\\
&\equiv\frac{p}{18}B_{p-2}\left(\frac13\right)\pmod{p^2}.
\end{align*}
So it is easy to see that
$$
\frac{2}{3p-1}\prod_{k=1}^{\frac{p-1}2}\frac{3k-1}{3k-2}\sum_{k=1}^{\frac{p-1}2}\frac1k\prod_{j=1}^k\frac{3j-2}{3j-1}\equiv0\pmod p.
$$
And hence,
\begin{align}\label{0p}
\sum_{j=0}^{\frac{p-1}2}\frac{\binom{2j}j^2}{16^j}\frac{H_{j}-H_{2j}}{3j+1}\equiv \sum_{j=0}^{\frac{p-1}2}\frac{\binom{\frac{p-1}2}j\binom{\frac{p-1}2+j}{j}(-1)^j(H_{j}-H_{2j})}{3j+1}\equiv0\pmod{p}.
\end{align}
In view of \cite{mp3}, we have
$$
p\sum_{j=0}^{(p-1)/2}\frac{\binom{2j}j^2}{16^j}\frac{1}{3j+1}\equiv4x^2-2p-\frac{p^2}{4x^2}\pmod{p^3}.
$$
So the case $p\equiv1\pmod3$ is finished.

\noindent{\it Case} $p\equiv2\pmod3$. In the same way of above, we have
\begin{align*}
&\sum_{k=0}^{p-1}\frac{D_k}{4^k}\equiv p\sum_{j=0}^{(p-1)/2}\frac{\binom{2j}j^2}{16^j}\frac{1-pH_{2j}+pH_j}{3j+1}\\
&\equiv p\sum_{j=0}^{\frac{p-1}2}\frac{\binom{\frac{p-1}2}j\binom{\frac{p-1}2+j}j(-1)^j}{3j+1}+p^2\sum_{j=0}^{\frac{p-1}2}\frac{\binom{\frac{p-1}2}j\binom{\frac{p-1}2+j}j(-1)^j(H_{j}-H_{2j})}{3j+1}\\
&=\frac{2p}{3p-1}\frac{(2/3)_{(p-1)/2}}{(1/3)_{(p-1)/2}}+\frac{2p^2}{3p-1}\frac{(2/3)_{(p-1)/2}}{(1/3)_{(p-1)/2}}\sum_{k=1}^{\frac{p-1}2}\frac{(\frac13)_k}{k(\frac23)_k}\pmod{p^3},
\end{align*}
where we used the following identity and (\ref{equat}):
$$
\sum_{k=0}^n\binom nk\binom{n+k}k\frac{(-1)^k}{3k+1}=\frac1{3n+1}\prod_{k=1}^n\frac{3k-1}{3k-2}.
$$
It is easy to see that
\begin{align}\label{similar}
&p\sum_{k=1}^{\frac{p-1}2}\frac{(\frac13)_k}{k(\frac23)_k}\equiv p\sum_{k=\frac{p+1}3}^{\frac{p-1}2}\frac{(\frac13)_k}{k(\frac23)_k}=p\sum_{k=1}^{\frac{p+1}6}\frac{(\frac13)_{k+\frac{p-2}3}}{(k+\frac{p-2}3)(\frac23)_{k+\frac{p-2}3}}\notag\\
&\equiv3\sum_{k=1}^{\frac{p+1}6}\frac{\binom{-1/3}{k+\frac{p-2}3}(-1)^{k+\frac{p-2}3}(k+\frac{p-5}3)!}{\frac23\cdots(\frac{p}3-1)(\frac{p}3+1)\cdots(\frac{p}3+k-1)}\notag\\
&\equiv3\sum_{k=1}^{\frac{p+1}6}\binom{-1/3}{k+\frac{p-2}3}\binom{k+\frac{p-5}3}{k-1}(-1)^k\equiv-3\sum_{k=1}^{\frac{p+1}6}\binom{-1/3}{k+\frac{p-2}3}\binom{-1/3}{k-1}\notag\\
&\equiv-3\sum_{k=1}^{\frac{p+1}6}\binom{\frac{2p-1}3}{\frac{p+1}3-k}\binom{-1/3}{k-1}\equiv-3\sum_{k=1}^{\frac{p+1}6}\binom{-1/3}{\frac{p+1}3-k}\binom{-1/3}{k-1}\pmod p.
\end{align}
We can find and prove the following identity by \texttt{Sigma}:
$$
\sum_{k=1}^{n}\binom{-1/3}{2n-k}\binom{-1/3}{k-1}=-\frac{3n}{6n-1}\prod_{k=1}^n\frac{(3k-2)(6k-1)}{9k(2k-1)}.
$$
So by substituting $n=(p+1)/6$ into the above identity and (\ref{pn}), we have
\begin{align*}
&p\sum_{k=1}^{\frac{p-1}2}\frac{(\frac13)_k}{k(\frac23)_k}\equiv\frac{3}{2p}\frac{(\frac13)_{\frac{p+1}6}(\frac56)_{\frac{p+1}6}}{(1)_{\frac{p+1}6}(\frac12)_{\frac{p+1}6}}\equiv\frac32\frac{(\frac56)_{\frac{p-5}6}}{(1)_{\frac{p-5}6}}\frac{(\frac13)_{\frac{p+1}6}}{(\frac12)_{\frac{p+1}6}}\\
&\equiv\frac{3(-1)^{\frac{p-5}6}}2\frac{\Gamma_p(\frac{p}6+\frac12)\Gamma_p(\frac12)}{\Gamma_p(\frac{p}6+\frac23)\Gamma_p(\frac13)}\equiv\frac{3(-1)^{\frac{p-5}6}}2\frac{\Gamma_p(\frac12)\Gamma_p(\frac12)}{\Gamma_p(\frac23)\Gamma_p(\frac13)}\pmod p.
\end{align*}
Hence, by (\ref{Gammap1xx}), we have
\begin{equation}\label{zhong}
p\sum_{k=1}^{\frac{p-1}2}\frac{(\frac13)_k}{k(\frac23)_k}\equiv\frac{3(-1)^{\frac{p-5}6}}2(-1)^{\frac{p+1}2}(-1)^{\frac{p+1}3}=-\frac32\pmod p.
\end{equation}
So
\begin{equation}\label{zhu11}
\sum_{k=0}^{p-1}\frac{D_k}{4^k}\equiv-\frac12\frac{2p}{3p-1}\frac{(2/3)_{(p-1)/2}}{(1/3)_{(p-1)/2}}\pmod{p^3}.
\end{equation}
And it is easy to see that
\begin{align*}
&\frac{2p}{3p-1}\frac{(2/3)_{(p-1)/2}}{(1/3)_{(p-1)/2}}\equiv-\frac{2p^2}3\frac{\frac23\cdots(\frac{p}3-1)(\frac{p}3+1)\cdots(\frac{p}3+\frac{p-5}6)}{(\frac13)_{\frac{p-1}2}}\notag\\
&\equiv\frac{2p^2}{\binom{\frac{p-1}2}{\frac{p-5}6}}\frac{(1)_{\frac{p-1}2}}{(\frac13)_{\frac{p-1}2}}=-\frac{2p^2}{\binom{\frac{p-1}2}{\frac{p-5}6}}\frac{\Gamma_p(\frac{p+1}2)\Gamma_p(\frac13)}{\Gamma_p(\frac{p}2-\frac16)}\pmod{p^3}.
\end{align*}
Then by (\ref{Gammap}), (\ref{Gammap1xx}) and (\ref{pn}) we have
\begin{align*}
&\frac{\Gamma_p(\frac{p+1}2)\Gamma_p(\frac13)}{\Gamma_p(\frac{p}2-\frac16)}\equiv(-1)^{\frac{5p-1}6}\Gamma_p(\frac12)\Gamma_p(\frac13)\Gamma_p(\frac76)=\frac{(-1)^{\frac{5p+5}6}}6\Gamma_p(\frac12)\Gamma_p(\frac13)\Gamma_p(\frac16)\\
&=\frac16\frac{\Gamma_p(\frac13)\Gamma_p(\frac16)}{\Gamma_p(\frac12)}\equiv\frac16\frac{\Gamma_p(\frac{p+1}3)\Gamma_p(\frac{p+1}6)}{\Gamma_p(\frac{p+1}2)}=\frac16\frac{\Gamma(\frac{p+1}3)\Gamma(\frac{p+1}6)}{\Gamma(\frac{p+1}2)}\equiv\frac12\frac{1}{\binom{\frac{p-1}2}{\frac{p-5}6}}\pmod p.
\end{align*}
Thus,
\begin{equation}\label{zhu12}
\frac{2p}{3p-1}\frac{(2/3)_{(p-1)/2}}{(1/3)_{(p-1)/2}}\equiv-p^2\binom{\frac{p-1}2}{\frac{p-5}6}^{-2}\pmod{p^3}.
\end{equation}
This, with (\ref{zhu11}) yields that
$$
\sum_{k=0}^{p-1}\frac{D_k}{4^k}\equiv\frac{p^2}2\binom{\frac{p-1}2}{\frac{p-5}6}^{-2}\pmod{p^3}.
$$
Therefore we obtain the desired result
 \begin{align*}
&\sum_{k=0}^{p-1}\frac{D_k}{4^k}\equiv\\
&\begin{cases}4x^2-2p-\frac{p^2}{4x^2}\ \pmod{p^3} &\tt{if}\ \textit{p}\equiv1\pmod 3\ \&\ \textit{p}=\textit{x}^2+3\textit{y}^2\ (\textit{x},\textit{y}\in\mathbb{Z}),
\\ \frac{p^2}2\binom{\frac{p-1}2}{\frac{p-5}6}^{-2}\ \pmod{p^3} &\tt{if}\ \textit{p}\equiv2\pmod3.\end{cases}
\end{align*}
On the other hand, in view of \cite[(5.5)]{SH2}, we have
\begin{align*}
\sum_{k=0}^{p-1}\frac{D_k}{16^k}\equiv\sum_{k=0}^{p-1}\frac{\binom{2k}k^2}{16^k}\frac{p}{3k+1}(1+pH_{2k}-pH_k)\pmod{p^3}.
\end{align*}
This, with (\ref{0p}) yields that if $p\equiv1\pmod3$ and $p=x^2+3y^2$,
$$
\sum_{k=0}^{p-1}\frac{D_k}{16^k}\equiv p\sum_{j=0}^{(p-1)/2}\frac{\binom{2j}j^2}{16^j}\frac{1}{3j+1}\equiv4x^2-2p-\frac{p^2}{4x^2}\pmod{p^3}
$$
and if $p\equiv2\pmod3$,
\begin{align}\label{zhu}
&\sum_{k=0}^{p-1}\frac{D_k}{16^k}-Y_1\equiv p\sum_{j=0}^{(p-1)/2}\frac{\binom{2j}j^2}{16^j}\frac{1+pH_{2j}-pH_j}{3j+1}\notag\\
&\equiv p\sum_{j=0}^{\frac{p-1}2}\frac{\binom{\frac{p-1}2}j\binom{\frac{p-1}2+j}j(-1)^j}{3j+1}-p^2\sum_{j=0}^{\frac{p-1}2}\frac{\binom{\frac{p-1}2}j\binom{\frac{p-1}2+j}j(-1)^j(H_{j}-H_{2j})}{3j+1}\notag\\
&=\frac{2p}{3p-1}\frac{(2/3)_{(p-1)/2}}{(1/3)_{(p-1)/2}}-\frac{2p^2}{3p-1}\frac{(2/3)_{(p-1)/2}}{(1/3)_{(p-1)/2}}\sum_{k=1}^{\frac{p-1}2}\frac{(\frac13)_k}{k(\frac23)_k}\notag\\
&\equiv\frac52\frac{2p}{3p-1}\frac{(2/3)_{(p-1)/2}}{(1/3)_{(p-1)/2}}\pmod{p^3},
\end{align}
where
$$
Y_1=\frac12\frac{\binom{\frac{4p-2}3}{\frac{2p-1}3}^2}{16^{\frac{2p-1}3}}(1+pH_{\frac{4p-2}3}-pH_{\frac{2p-1}3})=\frac12\binom{-1/2}{\frac{2p-1}3}^2(1+pH_{\frac{4p-2}3}-pH_{\frac{2p-1}3}).
$$
It is easy to see that
\begin{align*}
\binom{-1/2}{\frac{2p-1}3}^2=\frac{\binom{-1/2}{\frac{p-1}2}^2\binom{-p/2}{\frac{p+1}6}^2}{\binom{\frac{2p-1}3}{\frac{p-1}2}^2}\equiv\frac{9p^2}{\binom{\frac{2p-1}3}{\frac{p-1}2}^2}\pmod{p^3}.
\end{align*}
And by (\ref{Gammap}), (\ref{Gammap1xx}) and (\ref{pn}), we have
\begin{align*}
&\binom{\frac{2p-1}3}{\frac{p-1}2}^2=\frac{\Gamma(\frac{2p+2}3)^2}{\Gamma(\frac{p+1}2)^2\Gamma(\frac{p+7}6)^2}=\frac{\Gamma_p(\frac{2p+2}3)^2}{\Gamma_p(\frac{p+1}2)^2\Gamma_p(\frac{p+7}6)^2}\equiv\frac{\Gamma_p(\frac23)^2}{\Gamma_p(\frac12)^2\Gamma_p(\frac76)^2}\\
&=\frac{36\Gamma_p(\frac12)^2}{\Gamma_p(\frac13)^2\Gamma_p(\frac16)^2}\equiv\frac{36\Gamma_p(\frac{p+1}2)^2}{\Gamma_p(\frac{p+1}3)^2\Gamma_p(\frac{p+1}6)^2}=\frac{36(\frac{p-1}2)!^2}{(\frac{p-2}3)!^2(\frac{p-5}6)!^2}\equiv4\binom{\frac{p-1}2}{\frac{p-5}6}^2\pmod p.
\end{align*}
It is easy to see that
$$
1+pH_{\frac{4p-2}3}-pH_{\frac{2p-1}3}\equiv2\pmod p.
$$
These yield that
$$
Y_1\equiv\frac{9p^2}{4}\binom{\frac{p-1}2}{\frac{p-5}6}^{-2}\pmod {p^3}.
$$
This, with (\ref{zhu12}) and (\ref{zhu}) yields the desired result
\begin{align*}
\sum_{k=0}^{p-1}\frac{D_k}{16^k}\equiv-\frac{p^2}4\binom{\frac{p-1}2}{\frac{p-5}6}^{-2}\pmod{p^3}.
\end{align*}
Now we finish the proof of Theorem \ref{Th1.1}.
\section{Proof of Theorem \ref{Th1.2}}
\setcounter{lemma}{0}
\setcounter{theorem}{0}
\setcounter{corollary}{0}
\setcounter{remark}{0}
\setcounter{equation}{0}
\setcounter{conjecture}{0}
\begin{lemma}\label{p2j} Let $p>2$ be a prime. If $0\leq j\leq (p-1)/2$, then we have
$$
(3j+1)\binom{3j}j\binom{p+2j}{3j+1}\equiv p(-1)^j(1+pH_{2j}-pH_j)\pmod{p^3}.
$$
If $(p+1)/2\leq j\leq p-1$, then
$$
(3j+1)\binom{3j}j\binom{p+2j}{3j+1}\equiv 2p^2(-1)^j(H_{2j}-H_j)\pmod{p^3}.
$$
\end{lemma}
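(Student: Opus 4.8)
The plan is to follow the template of Lemma \ref{Lem2.2}, but to use the extra factor $3j+1$ to clear the denominator that forced the exceptional case there. First I would simplify the left-hand side via $(3j+1)\binom{3j}j=\frac{(3j+1)!}{j!(2j)!}$, so that the factor $(3j+1)!$ cancels against the denominator of $\binom{p+2j}{3j+1}$ and
\begin{align*}
(3j+1)\binom{3j}j\binom{p+2j}{3j+1}=\frac{(p+2j)(p+2j-1)\cdots(p-j)}{j!\,(2j)!},
\end{align*}
a run of $3j+1$ consecutive integers straddling $p$. I would split this run as $\big[\prod_{i=1}^{2j}(p+i)\big]\cdot p\cdot\big[\prod_{i=1}^{j}(p-i)\big]$, exposing the single factor $p$ coming from the middle term. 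Note that, unlike in Lemma \ref{Lem2.2}, no $1/(3j+1)$ survives, so the value $3j+1=p$ (i.e. $j=(p-1)/3$) causes no trouble; the only care needed is genuine $p$-divisibility, which is governed by the two ranges of $j$.

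\emph{Case} $0\le j\le(p-1)/2$. Here $2j\le p-1$, so no factor of either product is divisible by $p$ and $H_{2j},H_j$ are $p$-adic integers. Writing $\prod_{i=1}^{2j}(p+i)\equiv(2j)!(1+pH_{2j})$ and $\prod_{i=1}^{j}(p-i)\equiv(-1)^j j!(1-pH_j)\pmod{p^2}$ — the leading factor $p$ lowers the needed precision to $p^2$ — I would obtain
\begin{align*}
(3j+1)\binom{3j}j\binom{p+2j}{3j+1}\equiv p(-1)^j(1+pH_{2j})(1-pH_j)\equiv p(-1)^j(1+pH_{2j}-pH_j)\pmod{p^3},
\end{align*}
which is the first assertion.

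\emph{Case} $(p+1)/2\le j\le p-1$. This is more delicate: now $2j\ge p+1$, so the product $\prod_{i=1}^{2j}(p+i)$ contains the term $i=p$, namely $2p$, contributing an extra power of $p$, while at the same time $H_{2j}$ acquires the simple pole $1/p$. I would pull the factor $2p$ out, write $\prod_{i=1}^{2j}(p+i)=2(2j)!\prod_{1\le i\le 2j,\,i\ne p}(1+p/i)$, and work with the $p$-deleted sum $H_{2j}-\frac1p$, which is a $p$-adic integer. This gives
\begin{align*}
(3j+1)\binom{3j}j\binom{p+2j}{3j+1}\equiv 2p(-1)^j\Big(1+p\big(H_{2j}-\tfrac1p\big)-pH_j\Big)=2p(-1)^j+2p^2(-1)^j\big(H_{2j}-\tfrac1p-H_j\big)\pmod{p^3}.
\end{align*}
Finally I would reconcile this with the claimed $2p^2(-1)^j(H_{2j}-H_j)$ by noting that the pole contributes exactly $2p^2(-1)^j\cdot\frac1p=2p(-1)^j$, which cancels the stray leading term.

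The expansions of $\prod(1\pm p/i)$ are routine; the genuine obstacle is the $p$-adic bookkeeping of the second case, where one must check that the extra factor of $p$ in the numerator and the simple pole of $H_{2j}$ combine to produce precisely the factor $2p^2$ together with the combination $H_{2j}-H_j$, leaving no residual term of order $p$.
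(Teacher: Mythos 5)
Your proposal is correct and is essentially the paper's own proof: in both cases you expand the run of $3j+1$ consecutive integers $(p+2j)\cdots(p+1)\,p\,(p-1)\cdots(p-j)$ over $j!\,(2j)!$ to first order in $p$, extracting in the second case the extra factor $2p$ at $i=p$. The only (harmless) difference is bookkeeping: you work with the exact deleted sum $H_{2j}-\frac1p$ and cancel the stray $2p(-1)^j$ term directly, while the paper splits off the block $(p+1)\cdots(2p-1)$ and rewrites $1+p\sum_{k=p+1}^{2j}\frac1k$ as $pH_{2j}$ using $H_{p-1}\equiv0\pmod{p^2}$ — a Wolstenholme-type input your formulation avoids.
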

\begin{proof}If $0\leq j\leq (p-1)/2$, then we have
\begin{align*}
(3j+1)\binom{3j}j\binom{p+2j}{3j+1}&=\frac{(p+2j)\cdots(p+1)p(p-1)\cdots(p-j)}{j!(2j)!}\\
&\equiv\frac{p(2j)!(1+pH_{2j})(-1)^{j}(j)!(1-pH_j)}{j!(2j)!}\\
&=p(-1)^j(1+pH_{2j}-pH_{j})\pmod{p^3}.
\end{align*}
If $(p+1)/2\leq j\leq p-1$, then by Lemma \ref{sunh}, we have
\begin{align*}
&(3j+1)\binom{3j}j\binom{p+2j}{3j+1}\\
&=\frac{(p+2j)\cdots(2p+1)(2p)(2p-1)\cdots(p+1)p(p-1)\cdots(p-j)}{j!(2j)!}\\
&\equiv\frac{2p^2(2j)\cdots(p+1)\left(1+p\sum_{k=p+1}^{2j}\frac1k\right)(p-1)!(-1)^{j}(j)!(1-pH_j)}{j!(2j)!}\\
&=2p(-1)^j\left(1+p\sum_{k=p+1}^{2j}\frac1k\right)(1-pH_j)\equiv2p(-1)^jpH_{2j}(1-pH_j)\\
&\equiv2p^2(-1)^j(H_{2j}-pH_{2j}H_j)\equiv2p^2(-1)^j(H_{2j}-H_j)\pmod{p^3}.
\end{align*}
Now the proof of Lemma \ref{p2j} is complete.
\end{proof}
\noindent{\it Proof of Theorem \ref{Th1.2}}. Similarly, by (\ref{2.1}), we have
\begin{align}\label{dk4}
\sum_{k=0}^{p-1}(3k+2)\frac{D_k}{4^k}&=\sum_{k=0}^{p-1}\frac{3k+2}{4^k}\sum_{j=0}^{\lfloor k/2\rfloor}\binom{k+j}{3j}\binom{2j}j^2\binom{3j}j4^{k-2j}\notag\\
&=\sum_{j=0}^{(p-1)/2}\frac{\binom{2j}j^2\binom{3j}j}{16^j}\sum_{k=2j}^{p-1}(3k+2)\binom{k+j}{3j}.
\end{align}
By loading the package \texttt{Sigma} in the software \texttt{Mathematica}, we have the following identity:
$$
\sum_{k=2j}^{n-1}(3k+2)\binom{k+j}{3j}=\frac{(3n+1)(3j+1)}{3j+2}\binom{n+j}{3j+1}.$$
Thus, replacing $n$ by $p$ in the above identity and then substitute it into (\ref{dk4}), we have
\begin{equation*}
\sum_{k=0}^{p-1}(3k+2)\frac{D_k}{4^k}=\sum_{j=0}^{(p-1)/2}\frac{\binom{2j}j^2\binom{3j}j}{16^j}\frac{(3p+1)(3j+1)}{3j+2}\binom{p+j}{3j+1}.
\end{equation*}
Combining Lemma \ref{Lem2.2} we can obtain that for any $0\leq j\leq (p-1)/2$,
$$
(3j+1)\binom{3j}j\binom{p+j}{3j+1}\equiv p(1-pH_{2j}+pH_j)\pmod{p^3}.
$$
Since $p\equiv1\pmod3$, so we have
\begin{align*}
&\frac1{3p+1}\sum_{k=0}^{p-1}(3k+2)\frac{D_k}{4^k}\equiv p\sum_{j=0}^{(p-1)/2}\frac{\binom{2j}j^2}{16^j}\frac{1-pH_{2j}+pH_j}{3j+2}\\
\equiv& p\sum_{j=0}^{\frac{p-1}2}\frac{\binom{\frac{p-1}2}j\binom{\frac{p-1}2+j}j(-1)^j}{3j+2}-p^2\sum_{j=0}^{\frac{p-1}2}\frac{\binom{\frac{p-1}2}j\binom{\frac{p-1}2+j}j(-1)^j(H_{2j}-H_j)}{3j+2}\pmod{p^3}.
\end{align*}
By the \texttt{Sigma} again, we find the following two identities:
\begin{gather*}
\sum_{k=0}^n\binom{n}k\binom{n+k}k\frac{(-1)^k}{3k+2}=\frac1{3n+2}\prod_{k=1}^n\frac{3k-2}{3k-1},\\
\sum_{k=0}^n\binom{n}k\binom{n+k}k\frac{(-1)^k(H_{2j}-H_j)}{3k+2}=-\frac1{3n+2}\prod_{k=1}^n\frac{3k-2}{3k-1}\sum_{k=1}^n\frac1k\prod_{j=1}^k\frac{3j-1}{3j-2}.
\end{gather*}
Hence
\begin{align}\label{xuyao}
\sum_{k=0}^{p-1}(3k+2)\frac{D_k}{4^k}\equiv2p\prod_{k=1}^{\frac{p-1}2}\frac{3k-2}{3k-1}\left(1+p\sum_{k=1}^n\frac1k\prod_{j=1}^k\frac{3j-1}{3j-2}\right)\pmod{p^3}.
\end{align}
And it is easy to see that
\begin{align*}
&2p\frac{(1/3)_{(p-1)/2}}{(2/3)_{(p-1)/2}}\equiv\frac{2p^2}3\frac{\frac13\cdots(\frac{p}3-1)(\frac{p}3+1)\cdots(\frac{p}3+\frac{p-7}6)}{(\frac23)_{\frac{p-1}2}}\notag\\
&\equiv\frac{-4p^2}{\binom{\frac{p-1}2}{\frac{p-1}6}}\frac{(1)_{\frac{p-1}2}}{(\frac23)_{\frac{p-1}2}}=\frac{4p^2}{\binom{\frac{p-1}2}{\frac{p-1}6}}\frac{\Gamma_p(\frac{p+1}2)\Gamma_p(\frac23)}{\Gamma_p(\frac{p}2+\frac16)}\pmod{p^3}.
\end{align*}
Then by (\ref{Gammap}), (\ref{Gammap1xx}) and (\ref{pn}) we have
\begin{align*}
&\frac{\Gamma_p(\frac{p+1}2)\Gamma_p(\frac23)}{\Gamma_p(\frac{p}2+\frac16)}\equiv\frac{\Gamma_p(\frac12)\Gamma_p(\frac23)}{\Gamma_p(\frac16)}=(-1)^{\frac{p-1}2}\frac{\Gamma_p(\frac56)\Gamma_p(\frac12)}{\Gamma_p(\frac13)}\\
&\equiv(-1)^{\frac{p-1}2}\frac{\Gamma_p(\frac{p+5}6)\Gamma_p(\frac{p+1}2)}{\Gamma_p(\frac{2p+1}3)}=(-1)^{\frac{p+5}6}\frac{\Gamma(\frac{p+5}6)\Gamma(\frac{p+1}2)}{\Gamma(\frac{2p+1}3)}\\
&\equiv(-1)^{\frac{p+5}6}\frac{1}{\binom{\frac{2p-2}3}{\frac{p-1}6}}\equiv-\frac{1}{\binom{\frac{p-1}2}{\frac{p-1}6}}\pmod p.
\end{align*}
Thus,
\begin{equation}\label{zhu13}
2p\frac{(1/3)_{(p-1)/2}}{(2/3)_{(p-1)/2}}\equiv-4p^2\binom{\frac{p-1}2}{\frac{p-1}6}^{-2}\pmod{p^3}.
\end{equation}
By similar manipulation as (\ref{similar}), we have
\begin{align*}
p\sum_{k=1}^{\frac{p-1}2}\frac{(\frac23)_k}{k(\frac13)_k}\equiv -3\sum_{k=1}^{\frac{p-1}6}\binom{-2/3}{\frac{p-1}3-k}\binom{-2/3}{k-1}\pmod p.
\end{align*}
We can find and prove the following identity by \texttt{Sigma}:
$$
\sum_{k=1}^{n}\binom{-2/3}{2n-k}\binom{-2/3}{k-1}=-3n\prod_{k=1}^n\frac{(3k-1)(6k-5)}{9k(2k-1)}.
$$
So by substituting $n=(p-1)/6$ into the above identity and (\ref{pn}), we have
\begin{align*}
&p\sum_{k=1}^{\frac{p-1}2}\frac{(\frac23)_k}{k(\frac13)_k}\equiv-\frac{3}{2}\frac{(\frac23)_{\frac{p-1}6}(\frac16)_{\frac{p-1}6}}{(1)_{\frac{p-1}6}(\frac12)_{\frac{p-1}6}}\equiv\frac{3(-1)^{\frac{p+5}6}}2\frac{\Gamma_p(\frac{p}6+\frac12)\Gamma_p(\frac12)}{\Gamma_p(\frac{p}6+\frac13)\Gamma_p(\frac23)}\\
&\equiv\frac{3(-1)^{\frac{p+5}6}}2\frac{\Gamma_p(\frac12)\Gamma_p(\frac12)}{\Gamma_p(\frac23)\Gamma_p(\frac13)}\pmod p.
\end{align*}
Hence, by (\ref{Gammap1xx}), we have
\begin{equation}\label{zhong1}
p\sum_{k=1}^{\frac{p-1}2}\frac{(\frac23)_k}{k(\frac13)_k}\equiv\frac{3(-1)^{\frac{p+5}6}}2(-1)^{\frac{p+1}2}(-1)^{\frac{2p+1}3}=-\frac32\pmod p.
\end{equation}
This, with (\ref{xuyao}) and (\ref{zhu13}) yields that
\begin{align*}
\sum_{k=0}^{p-1}(3k+2)\frac{D_k}{4^k}\equiv2p^2\binom{\frac{p-1}2}{\frac{p-1}6}^{-2}\pmod{p^3}.
\end{align*}
In the same way, by (\ref{2.0}), we have
\begin{align*}
\sum_{k=0}^{p-1}(3k+1)\frac{D_k}{16^k}&=\sum_{k=0}^{p-1}\frac{3k+1}{16^k}\sum_{j=0}^{k}(-1)^j\binom{k+2j}{3j}\binom{2j}j^2\binom{3j}j16^{k-j}\notag\\
&=\sum_{j=0}^{p-1}\frac{\binom{2j}j^2\binom{3j}j}{(-16)^j}\sum_{k=j}^{p-1}(3k+1)\binom{k+2j}{3j}.
\end{align*}
By loading the package \texttt{Sigma} in the software \texttt{Mathematica}, we have the following identity:
$$
\sum_{k=j}^{n-1}(3k+1)\binom{k+2j}{3j}=\frac{(3n-1)(3j+1)}{3j+2}\binom{n+2j}{3j+1}.$$
Thus, we have
\begin{equation*}
\sum_{k=0}^{p-1}(3k+1)\frac{D_k}{16^k}=\sum_{j=0}^{p-1}\frac{\binom{2j}j^2\binom{3j}j}{(-16)^j}\frac{(3p-1)(3j+1)}{3j+2}\binom{p+2j}{3j+1}.
\end{equation*}
It is known that $\binom{2j}j\equiv0\pmod p$ for each ${p+1}/2\leq j\leq p-1$, so combining Lemma \ref{p2j}, $p\equiv1\pmod3$, (\ref{zhu13}) and (\ref{zhong1}), we can obtain that
\begin{align*}
&\frac1{3p-1}\sum_{k=0}^{p-1}(3k+1)\frac{D_k}{16^k}\equiv p\sum_{j=0}^{(p-1)/2}\frac{\binom{2j}j^2}{16^j}\frac{1+pH_{2j}-pH_j}{3j+2}+\\
&2p^2\sum_{j=\frac{p+1}2}^{p-1}\frac{\binom{2j}j^2}{16^j}\frac{H_{2j}-H_j}{3j+2}\equiv -\frac52\frac{4p^2}{\binom{\frac{p-1}2}{\frac{p-1}6}^2}+p\binom{-\frac12}{\frac{2p-2}3}^2\left(H_{\frac{4p-4}3}-H_{\frac{2p-2}3}\right)\\
&=-10p^2\binom{\frac{p-1}2}{\frac{p-1}6}^{-2}+\binom{-\frac12}{\frac{2p-2}3}^2\pmod{p^3},
\end{align*}
where we used
$$
\binom{-\frac12}{\frac{2p-2}3}\equiv0\pmod p\ \ \ \mbox{and}\ \ \ p\left(H_{\frac{4p-4}3}-H_{\frac{2p-2}3}\right)\equiv1\pmod p.
$$
It is easy to see that
\begin{align*}
&\binom{-1/2}{(2p-2)/3}^2=\frac{(-\frac12)^2(-\frac12-1)^2\cdots(-\frac12-\frac{2p-2}3+1)^2}{(\frac{2p-2}3)!^2}\\
=&\frac{(\frac12)^2(\frac32)^2\cdots(\frac{p}2-1)^2\frac{p^2}4(\frac{p}2+1)^2\cdots(\frac{p}2+\frac{p-7}6)^2}{(\frac{2p-2}3)!^2}\\
=&\frac{(\frac{p}2-\frac{p-1}2)^2\cdots(\frac{p}2-1)^2\frac{p^2}4(\frac{p}2+1)^2\cdots(\frac{p}2+\frac{p-7}6)^2}{(\frac{2p-2}3)!^2}\\
\equiv&\frac{\frac{p^2}4(\frac{p-1}2)!^2(\frac{p-7}6)!^2}{(\frac{2p-2}3)!^2}=\frac{9p^2}{(p-1)^2}\frac1{\binom{\frac{2p-2}3}{\frac{p-1}2}^2}\equiv\frac{9p^2}{\binom{\frac{2p-2}3}{\frac{p-1}2}^2}\equiv\frac{9p^2}{\binom{\frac{p-1}2}{\frac{p-1}6}^2}\pmod{p^3}.
\end{align*}
Hence
$$
\sum_{k=0}^{p-1}(3k+1)\frac{D_k}{16^k}\equiv p^2\binom{\frac{p-1}2}{\frac{p-1}6}^{-2}\pmod{p^3}.
$$
Therefore, we get the desired result
$$
\sum_{k=0}^{p-1}(3k+2)\frac{D_k}{4^k}\equiv2\sum_{k=0}^{p-1}(3k+1)\frac{D_k}{16^k}\equiv2p^2\binom{\frac{p-1}2}{\frac{p-1}6}^{-2}\pmod{p^3}.
$$
\noindent Now the proof of Theorem \ref{Th1.2} is complete.\qed
\section{Proof of Theorem \ref{Th1.3}}
\setcounter{lemma}{0}
\setcounter{theorem}{0}
\setcounter{corollary}{0}
\setcounter{remark}{0}
\setcounter{equation}{0}
\setcounter{conjecture}{0}
{\it Proof of Theorem \ref{Th1.3}}. Similar as above, by (\ref{2.1}), we have
\begin{align*}
\sum_{k=0}^{p-1}k^2\frac{D_k}{4^k}&=\sum_{k=0}^{p-1}\frac{k^2}{4^k}\sum_{j=0}^{\lfloor k/2\rfloor}\binom{k+j}{3j}\binom{2j}j^2\binom{3j}j4^{k-2j}\notag\\
&=\sum_{j=0}^{(p-1)/2}\frac{\binom{2j}j^2\binom{3j}j}{16^j}\sum_{k=2j}^{p-1}k^2\binom{k+j}{3j}.
\end{align*}
By \texttt{Sigma}, we have the following identity:
\begin{align*}
&\sum_{k=2j}^{n-1}k^2\binom{k+j}{3j}\\
=&\frac{1-j^2-n(2j+3)(3j+1)+n^2(3j+1)(3j+2)}{(3j+2)(3j+3)}\binom{n+j}{3j+1}.
\end{align*}
Thus,
\begin{align*}
&\sum_{k=0}^{p-1}k^2\frac{D_k}{4^k}\\
=&\sum_{j=0}^{\frac{p-1}2}\frac{\binom{2j}j^2\binom{3j}j\binom{p+j}{3j+1}}{16^j}\frac{1-j^2-p(2j+3)(3j+1)+p^2(3j+1)(3j+2)}{(3j+2)(3j+3)}.
\end{align*}
In view of Lemma \ref{Lem2.2}, if $p\equiv1\pmod3$, then we have
\begin{align*}
&\sum_{k=0}^{p-1}k^2\frac{D_k}{4^k}\notag\\
\equiv&\sum_{k=0}^{\frac{p-1}2}\frac{\binom{2k}k^2}{16^k}\frac{p(1-k^2)-p^2(1-k^2)(H_{2k}-H_k)-p^2(2k+3)(3k+1)}{(3k+1)(3k+2)(3k+3)}\\
\equiv&\frac{p}9\sum_{k=0}^{\frac{p-1}2}\frac{\binom{2k}k^2}{16^k}\left(\frac{4}{3k+1}-\frac{5}{3k+2}\right)-\frac{p^2}3\sum_{k=0}^{\frac{p-1}2}\frac{\binom{2k}k^2}{16^k}\left(\frac{5}{3k+2}-\frac{1}{k+1}\right)\\
&-\frac{p^2}9\sum_{k=0}^{\frac{p-1}2}\frac{\binom{2k}k^2(H_{2k}-H_k)}{16^k}\left(\frac{4}{3k+1}-\frac{5}{3k+2}\right)\pmod{p^3}.
\end{align*}
In view of the process of proving Theorems \ref{Th1.1} and \ref{Th1.2}, \cite[(3.5)]{s2018} and \cite{YMK}, We have
 \begin{align*}
&\sum_{k=0}^{p-1}k^2\frac{D_k}{4^k}\notag\\
\equiv&\frac{p}9\sum_{k=0}^{\frac{p-1}2}\frac{\binom{2k}k^2}{16^k}\left(\frac{4}{3k+1}-\frac{5}{3k+2}\right)+\frac{5p^2}9\sum_{k=0}^{\frac{p-1}2}\frac{\binom{2k}k^2(H_{2k}-H_k)}{(3k+2)16^k}\\
\equiv&\frac{4}9\left(4x^2-2p-\frac{p^2}{4x^2}\right)-\frac{5}9\frac{-4p^2}{4x^2}+\frac59\frac32\frac{-4p^2}{4x^2}\\
=&\frac{16x^2}9-\frac{8p}9-\frac{7p^2}{18x^2}\pmod{p^3}.
\end{align*}
If $p\equiv2\pmod3$, then modulo $p^2$, we have
\begin{align*}
&\sum_{k=0}^{p-1}k^2\frac{D_k}{4^k}\notag\\
\equiv&\sum_{k=0}^{\frac{p-1}2}\frac{\binom{2k}k^2}{16^k}\frac{p(1-k^2)-p^2(1-k^2)(H_{2k}-H_k)-p^2(2k+3)(3k+1)}{(3k+1)(3k+2)(3k+3)}\\
\equiv&\frac{p}9\sum_{k=0}^{\frac{p-1}2}\frac{\binom{2k}k^2}{16^k}\left(\frac{4}{3k+1}-\frac{5}{3k+2}\right)-\frac{p^2}3\sum_{k=0}^{\frac{p-1}2}\frac{\binom{2k}k^2}{16^k}\left(\frac{5}{3k+2}-\frac{1}{k+1}\right)\\
&-\frac{p^2}9\sum_{k=0}^{\frac{p-1}2}\frac{\binom{2k}k^2(H_{2k}-H_k)}{16^k}\left(\frac{4}{3k+1}-\frac{5}{3k+2}\right)\\
\equiv&-\frac{5p(1+3p)}9\sum_{k=0}^{\frac{p-1}2}\frac{\binom{2k}k^2}{16^k(3k+2)}+\frac{5p^2}9\sum_{k=0}^{\frac{p-1}2}\frac{\binom{2k}k^2(H_{2k}-H_k)}{16^k(3k+2)}.
\end{align*}
And similar as above, we have
\begin{align}\label{p23p2}
\frac{2p}{3p+1}\frac{(\frac13)_{\frac{p-1}2}}{(\frac23)_{\frac{p-1}2}}\equiv\binom{\frac{2p-1}3}{\frac{p-1}2}\binom{\frac{p-1}2}{\frac{p+1}6}(-1)^{\frac{p+1}6}(1+2p-\frac{2p}3q_p(2))\pmod{p^2}.
\end{align}
and
$$
\sum_{k=1}^{\frac{p-1}2}\frac{(\frac23)_k}{k(\frac13)_k}\equiv-3\sum_{k=0}^{\frac{p-5}3}\frac1{3k+1}\equiv-3\sum_{k=1\atop k\equiv1\pmod3}^{p-1}\frac1k-3\equiv-3\pmod{p}.
$$
So
\begin{align}\label{h2khk16k}
\sum_{k=0}^{\frac{p-1}2}\frac{\binom{2k}k^2(H_{2k}-H_k)}{16^k(3k+2)}\equiv3\sum_{k=0}^{\frac{p-1}2}\frac{\binom{2k}k^2}{16^k(3k+2)}\pmod p.
\end{align}
Hence
\begin{align*}
&\sum_{k=0}^{p-1}k^2\frac{D_k}{4^k}\equiv-\frac{5p}9\sum_{k=0}^{\frac{p-1}2}\frac{\binom{2k}k^2}{16^k(3k+2)}\\
\equiv&-\frac{5}9\binom{\frac{2p-1}3}{\frac{p-1}2}\binom{\frac{p-1}2}{\frac{p+1}6}(-1)^{\frac{p+1}6}(1+2p-\frac{2p}3q_p(2))\pmod{p^2}.
\end{align*}
It is easy to check that
\begin{align}\label{zuhep2}
&\binom{\frac{2p-1}3}{\frac{p-1}2}\binom{\frac{p-1}2}{\frac{p+1}6}(-1)^{\frac{p+1}6}\equiv\binom{\frac{p-1}2}{\frac{p+1}6}^2\left(1+2pq_p(2)-\frac{3p}2q_p(3)\right)\notag\\
&\equiv4\binom{\frac{p-1}2}{\frac{p-5}6}^2\left(1+2pq_p(2)-\frac{3p}2q_p(3)\right)\pmod{p^2}.
\end{align}
Thus,
\begin{align*}
\sum_{k=0}^{p-1}k^2\frac{D_k}{4^k}&\equiv-\frac{20}9\binom{\frac{p-1}2}{\frac{p-5}6}^2(1+2p+\frac{4p}3q_p(2)-\frac{3p}2q_p(3))\\
&=-\frac{20}9R_3(p)\pmod{p^2}.
\end{align*}
So we obtain the first congruence in Theorem \ref{Th1.3}.

\noindent Now we consider the second congruence in Theorem \ref{Th1.3}. Similar as above, by (\ref{2.0}), we have
\begin{align*}
\sum_{k=0}^{p-1}k^2\frac{D_k}{16^k}&=\sum_{k=0}^{p-1}\frac{k^2}{16^k}\sum_{j=0}^{k}(-1)^j\binom{k+2j}{3j}\binom{2j}j^2\binom{3j}j16^{k-j}\notag\\
&=\sum_{j=0}^{p-1}\frac{\binom{2j}j^2\binom{3j}j}{(-16)^j}\sum_{k=j}^{p-1}k^2\binom{k+2j}{3j}.
\end{align*}
By \texttt{Sigma}, we have the following identity:
\begin{align*}
&\sum_{k=j}^{n-1}k^2\binom{k+2j}{3j}\\
=&\frac{1+3j+2j^2-n(4j+3)(3j+1)+n^2(3j+1)(3j+2)}{(3j+2)(3j+3)}\binom{n+2j}{3j+1}.
\end{align*}
Thus, if $p\equiv1\pmod3$, then modulo $p^3$, we have
\begin{align*}
&\sum_{k=0}^{p-1}k^2\frac{D_k}{16^k}+\frac1{18p(2p+1)}\binom{-1/2}{\frac{2p-2}3}^2\binom{2p-2}{\frac{2p-2}3}\binom{p+\frac{4p-4}3}{2p-1}\\
\equiv&\sum_{j=0}^{\frac{p-1}2}\frac{\binom{2j}j^2\binom{3j}j\binom{p+2j}{3j+1}}{(-16)^j}\frac{1+3j+2j^2-p(4j+3)(3j+1)+p^2(3j+1)(3j+2)}{(3j+2)(3j+3)}.
\end{align*}
Hence, similar as above, we have
\begin{align*}
&\sum_{k=0}^{p-1}k^2\frac{D_k}{16^k}+\frac1{18p(2p+1)}\binom{-1/2}{\frac{2p-2}3}^2\binom{2p-2}{\frac{2p-2}3}\binom{p+\frac{4p-4}3}{2p-1}\\
\equiv&\frac{p}9\sum_{j=0}^{\frac{p-1}2}\frac{\binom{2j}j^2}{16^j}\left(\frac{1}{3j+1}+\frac1{3j+2}\right)-\frac{p^2}3\sum_{j=0}^{\frac{p-1}2}\frac{\binom{2j}j^2}{16^j}\left(\frac{1}{3j+2}+\frac1{j+1}\right)\\
&+\frac{p^2}9\sum_{j=0}^{\frac{p-1}2}\frac{\binom{2j}j^2}{16^j}\left(\frac{1}{3j+1}+\frac1{3j+2}\right)(H_{2j}-H_j)\pmod{p^3}.
\end{align*}
In view of the process of proving Theorems \ref{Th1.1} and \ref{Th1.2}, \cite[(3.5)]{s2018} and \cite{YMK}, We have
\begin{align*}
&\sum_{k=0}^{p-1}k^2\frac{D_k}{16^k}+\frac1{18p(2p+1)}\binom{-1/2}{\frac{2p-2}3}^2\binom{2p-2}{\frac{2p-2}3}\binom{p+\frac{4p-4}3}{2p-1}\\
\equiv&\frac19\left((4x^2-2p-\frac{p^2}{4x^2}\right)+\frac19\frac32\frac{-4p^2}{4x^2}+\frac19\frac{-4p^2}{4x^2}\\
\equiv&\frac{4x^2}9-\frac{2p}9-\frac{11p^2}{36x^2}\pmod{p^3}.
\end{align*}
It is easy to see that
$$
\binom{2p-2}{\frac{2p-2}3}\binom{p+\frac{4p-4}3}{2p-1}\equiv-2p\pmod{p^2}.
$$
This, with the above $\binom{-1/2}{\frac{2p-2}3}^2\equiv9p^2/(4x^2)\pmod{p^3}$, we immediately get that
$$
\sum_{k=0}^{p-1}k^2\frac{D_k}{16^k}\equiv\frac{4x^2}9-\frac{2p}9-\frac{11p^2}{36x^2}-\left(-\frac{p^2}{4x^2}\right)=\frac{4x^2}9-\frac{2p}9-\frac{p^2}{18x^2}\pmod{p^3}.
$$
If $p\equiv2\pmod3$, then modulo $p^2$, we have
\begin{align*}
&\sum_{k=0}^{p-1}k^2\frac{D_k}{16^k}\\
\equiv&\sum_{j=0}^{\frac{p-1}2}\frac{\binom{2j}j^2\binom{3j}j\binom{p+2j}{3j+1}}{(-16)^j}\frac{1+3j+2j^2-p(4j+3)(3j+1)+p^2(3j+1)(3j+2)}{(3j+2)(3j+3)}.
\end{align*}
Hence, similar as above, we have
\begin{align*}
&\sum_{k=0}^{p-1}k^2\frac{D_k}{16^k}\\
\equiv&\frac{p}9\sum_{j=0}^{\frac{p-1}2}\frac{\binom{2j}j^2}{16^j}\left(\frac{1}{3j+1}+\frac1{3j+2}\right)-\frac{p^2}3\sum_{j=0}^{\frac{p-1}2}\frac{\binom{2j}j^2}{16^j}\left(\frac{1}{3j+2}+\frac1{j+1}\right)\\
&+\frac{p^2}9\sum_{j=0}^{\frac{p-1}2}\frac{\binom{2j}j^2}{16^j}\left(\frac{1}{3j+1}+\frac1{3j+2}\right)(H_{2j}-H_j)\\
\equiv&\frac{p}9\sum_{j=0}^{\frac{p-1}2}\frac{\binom{2j}j^2}{(3j+2)16^j}-\frac{p^2}3\sum_{j=0}^{\frac{p-1}2}\frac{\binom{2j}j^2}{(3j+2)16^j}+\frac{p^2}9\sum_{j=0}^{\frac{p-1}2}\frac{\binom{2j}j^2}{(3j+2)16^j}(H_{2j}-H_j)\\
\equiv&\frac{p}9\sum_{j=0}^{\frac{p-1}2}\frac{\binom{2j}j^2}{(3j+2)16^j}=-\frac15\sum_{k=0}^{p-1}k^2\frac{D_k}{4^k}=\frac49R_3(p)\pmod{p^2}.
\end{align*}
\noindent Now the proof of the second congruence in Theorem \ref{Th1.3} is complete.

\noindent{\it Proof of (\ref{kdk4k})}. Similarly, 
\begin{align*}
\sum_{k=0}^{p-1}k\frac{D_k}{4^k}&=\sum_{k=0}^{p-1}\frac{k}{4^k}\sum_{j=0}^{\lfloor k/2\rfloor}\binom{k+j}{3j}\binom{2j}j^2\binom{3j}j4^{k-2j}\\
&=\sum_{j=0}^{\frac{p-1}2}\frac{\binom{2j}j^2\binom{3j}j}{16^j}\sum_{k=2j}^{p-1}k\binom{k+j}{3j}.
\end{align*}
By \texttt{Sigma}, we find the following identity which can be proved by induction on $n$:
$$
\sum_{k=2j}^{n-1}k\binom{k+j}{3j}=\frac{3nj+n-j-1}{3j+2}\binom{n+j}{3j+1}.
$$
Hence
\begin{align*}
\sum_{k=0}^{p-1}k\frac{D_k}{4^k}=\sum_{j=0}^{\frac{p-1}2}\frac{\binom{2j}j^2\binom{3j}j}{16^j}\frac{3pj+p-j-1}{3j+2}\binom{p+j}{3j+1}.
\end{align*}
In view of Lemma \ref{Lem2.2}, and $p\equiv2\pmod3$, then modulo $p^2$ we have
\begin{align*}
&\sum_{k=0}^{p-1}k\frac{D_k}{4^k}\equiv\sum_{k=0}^{\frac{p-1}2}\frac{\binom{2k}k^2}{16^k}\frac{p(-1-k)+p^2(1+k)(H_{2k}-H_k)+p^2(3k+1)}{(3k+1)(3k+2)}\\
\equiv&-\frac{p}3\sum_{k=0}^{\frac{p-1}2}\frac{\binom{2k}k^2}{16^k}\left(\frac{2}{3k+1}-\frac{1}{3k+2}\right)+p^2\sum_{k=0}^{\frac{p-1}2}\frac{\binom{2k}k^2}{(3k+2)16^k}\\
&\ \ \ \ \ \ \ +\frac{p^2}3\sum_{k=0}^{\frac{p-1}2}\frac{\binom{2k}k^2(H_{2k}-H_k)}{16^k}\left(\frac{2}{3k+1}-\frac{1}{3k+2}\right)\\
\equiv&\frac{p}3\sum_{k=0}^{\frac{p-1}2}\frac{\binom{2k}k^2}{(3k+2)16^k}+p^2\sum_{k=0}^{\frac{p-1}2}\frac{\binom{2k}k^2}{(3k+2)16^k}-\frac{p^2}3\sum_{k=0}^{\frac{p-1}2}\frac{\binom{2k}k^2(H_{2k}-H_k)}{(3k+2)16^k}.
\end{align*}
By (\ref{p23p2})-(\ref{zuhep2}), we have
\begin{align*}
\sum_{k=0}^{p-1}k\frac{D_k}{4^k}\equiv&\frac{p}3\sum_{k=0}^{\frac{p-1}2}\frac{\binom{2k}k^2}{(3k+2)16^k}\\
\equiv&\frac13\binom{\frac{2p-1}3}{\frac{p-1}2}\binom{\frac{p-1}2}{\frac{p+1}6}(-1)^{\frac{p+1}6}\left(1+2p-\frac{2p}3q_p(2)\right)\\
\equiv&\frac43R_3(p)\pmod{p^2}.
\end{align*}
Similarly, 
\begin{align*}
\sum_{k=0}^{p-1}k\frac{D_k}{16^k}&=\sum_{k=0}^{p-1}\frac{k}{16^k}\sum_{j=0}^{k}(-1)^j\binom{k+2j}{3j}\binom{2j}j^2\binom{3j}j16^{k-j}\\
&=\sum_{j=0}^{p-1}\frac{\binom{2j}j^2\binom{3j}j}{(-16)^j}\sum_{k=j}^{p-1}k\binom{k+2j}{3j}.
\end{align*}
By \texttt{Sigma}, we find the following identity which can be proved by induction on $n$:
$$
\sum_{k=j}^{n-1}k\binom{k+2j}{3j}=\frac{3nj+n-2j-1}{3j+2}\binom{n+2j}{3j+1}.
$$
Hence
\begin{align*}
\sum_{k=0}^{p-1}k\frac{D_k}{16^k}=\sum_{j=0}^{p-1}\frac{\binom{2j}j^2\binom{3j}j}{(-16)^j}\frac{3pj+p-2j-1}{3j+2}\binom{p+2j}{3j+1}.
\end{align*}
It is known that $\binom{2k}k\equiv0\pmod p$ for each $(p+1)/2\leq k\leq p-1$, then by Lemma \ref{p2j} and $p\equiv2\pmod3$, we have the following modulo $p^2$,
\begin{align*}
&\sum_{k=0}^{p-1}k\frac{D_k}{16^k}\equiv\sum_{k=0}^{\frac{p-1}2}\frac{\binom{2k}k^2}{16^k}\frac{p(-1-2k)-p^2(1+2k)(H_{2k}-H_k)+p^2(3k+1)}{(3k+1)(3k+2)}\\
&\equiv-\frac{p}3\sum_{k=0}^{\frac{p-1}2}\frac{\binom{2k}k^2}{16^k}\left(\frac{1}{3k+1}+\frac{1}{3k+2}\right)+p^2\sum_{k=0}^{\frac{p-1}2}\frac{\binom{2k}k^2}{(3k+2)16^k}\\
&\ \ \ \ \ \ \ -\frac{p^2}3\sum_{k=0}^{\frac{p-1}2}\frac{\binom{2k}k^2(H_{2k}-H_k)}{16^k}\left(\frac{1}{3k+1}+\frac{1}{3k+2}\right)\\
&\equiv-\frac{p}3\sum_{k=0}^{\frac{p-1}2}\frac{\binom{2k}k^2}{(3k+2)16^k}+p^2\sum_{k=0}^{\frac{p-1}2}\frac{\binom{2k}k^2}{(3k+2)16^k}-\frac{p^2}3\sum_{k=0}^{\frac{p-1}2}\frac{\binom{2k}k^2(H_{2k}-H_k)}{(3k+2)16^k}.
\end{align*}
By (\ref{h2khk16k}), we have
\begin{align*}
\sum_{k=0}^{p-1}k\frac{D_k}{16^k}\equiv-\frac{p}3\sum_{k=0}^{\frac{p-1}2}\frac{\binom{2k}k^2}{(3k+2)16^k}\equiv-\sum_{k=0}^{p-1}k\frac{D_k}{4^k}\pmod{p^2}.
\end{align*}
Therefore,
$$
\sum_{k=0}^{p-1}k\frac{D_k}{4^k}\equiv-\sum_{k=0}^{p-1}k\frac{D_k}{16^k}\equiv\frac43R_3(p)\pmod{p^2}.
$$
Now we finish the proof of Theorem \ref{Th1.3}.\qed

\Ack. The first author is funded by the National Natural Science Foundation of China (12001288) and China Scholarship Council (202008320187).

\end{document}